
\documentclass[smallextended,referee,envcountsect]{svjour3}

\smartqed
\usepackage{booktabs}
\usepackage{placeins}
\usepackage{algorithm}
\usepackage[misc]{ifsym}
\usepackage{algpseudocode}
\usepackage{subcaption}
\usepackage{array}    
\usepackage{booktabs} 
\usepackage{amsmath}
\usepackage{amsfonts}
\usepackage{graphicx}
\usepackage{graphicx}
\usepackage{tcolorbox,enumerate}
\usepackage{amssymb,latexsym,amsmath,enumitem,amscd,amsthm,graphicx,color}
\usepackage{hyperref}
\allowdisplaybreaks
\newtheorem{Algorithm}{Algorithm}[section]
\newtheorem{e1}{Example}
\begin{document}

\title{An Adaptive Order Caputo Fractional Gradient Descent Method for Multi-objective Optimization Problems}

\author{Barsha Shaw, Md Abu Talhamainuddin Ansary}

\institute{Barsha Shaw and  Md Abu Talhamainuddin Ansary \at
Department of Mathematics\\
IIT Jodhpur, Jodhpur, India 342030\\
Email:\\
B. Shaw:shawbarsha307@gmail.com\\
M. A. T, Ansary: md.abutalha2009@gmail.com (\Letter)
 }


\maketitle
\begin{abstract}
This article introduces the multi-objective adaptive order Caputo fractional gradient descent (MOAOCFGD) algorithm for solving unconstrained multi-objective problems. The proposed method performs equally well for both smooth and non-smooth multi-objective optimization problems. Moreover, the proposed method does not require any a priori chosen parameters or ordering information of the objective functions. At every iteration of the proposed method, a subproblem is solved to identify a suitable descent direction toward an optimal solution. This subproblem involves an adaptive-order Caputo fractional gradient for each objective function. An Armijo-type line search is applied to determine a suitable step length. The convergence of this method for the Tikhonov-regularized solution is justified under mild assumptions. The proposed method is verified using different numerical problems, including neural networks.
\end{abstract}
\keywords{multi-objective optimization; adaptive order Caputo fractional derivative; fractional gradient descent method; critical point; Tikhonov regularization}
\subclass{26A33 \and 90C29 \and 90C25 \and 65K99 \and 49M05}
\section{Introduction}\label{sec1}
In the fractional calculus part, the concept of derivative of order $n^{th}$ ( $n\in\mathbb{N}$) is extended to some fractional order. The study of fractional calculus has become an area of research interest due to its applications in different fields of engineering, such as mechanical, electrical, signal analysis, quantum mechanical, bioengineering, biomedicine, financial systems, machine learning, neural networks, etc. (see \cite{A1,A2,A3,A4}). The Riemann-Liouville (R-L) system fractional derivative ($_aD^{\alpha}_x$) and the Caputo fractional derivative ( $_a^cD^{\alpha}_x$) are two widely used variants of the fractional calculus (see \cite{20}). In \cite{20}, the monotonicity of both fractional derivatives is discussed in $(0,1)$, which is further extended by \cite{Barsha} to any generalized intervals $(\beta,\beta+1)$ where $\beta$ is a positive integer. Some theoretical results for fractional derivatives are discussed in \cite{23,24}. Similarly to integer-order derivatives, different types of fractional-order derivatives have a big impact on optimization. The necessary condition for optimality using the Caputo fractional derivative is developed in \cite{21}. In \cite{38}, a quasi-fractional gradient of a multivariate function is introduced. The Taylor-Riemann series and mean value theorem using the Caputo fractional derivative are introduced in \cite{72}. The gradient descent method using the Caputo fractional derivative was developed there. This derivation uses the Taylor series expansion of a multi-variate function using the Caputo fractional derivative. Convergence of the proposed method is justified for Tikhonov regularization, and numerical experiments are performed for neural network problems.
\par Multi-objective optimization involves minimizing several objective functions as a standard. Applications of the type of problems can be found in computer science engineering, healthcare logistics, supply chain management, machine learning, neural architecture search, financial portfolio optimization, electrical signal analysis, quantum mechanics, bio-engineering, bio-medicine, financial systems, and machine learning, neural networks, etc (see \cite{45,46,44,43}). Older methods for solving multi-objective problems use either scalarization or heuristic approaches. However, scalarization techniques are user-dependent, and heuristic approaches can not guarantee the convergence to the solution (see \cite{50}). To overcome these limitations, different descent methods have been developed by different researchers using single-objective descent methods. These techniques include gradient-based techniques for smooth problems (see \cite{sir1,67,66,JOTA2,60,13,25}) and proximal methods for non-smooth problems (see \cite{30,61,JOTA1,64}), and descent methods for uncertain multi-objective optimization problems involving finite uncertainty (see \cite{kumar1,kumar2,kumar3}).

Designing a neural network is not just about getting high accuracy; it also involves making sure the model is efficient in terms of size, speed, and energy use. Because there are so many possible combinations of layers, neurons, and activation functions, it is difficult and time-consuming to find the best settings manually. To solve this, \cite{69} proposed a method using multi-objective optimization to design neural networks automatically. Their technique builds a model to predict which combinations are likely to perform well, so fewer networks need to be trained and tested. This idea is also supported by other studies (see \cite{71}), which show that automated neural architecture search can save both time and computing resources. This article proposes a descent method for solving multi-objective optimization problems using the Caputo fractional derivative. The proposed methodology incorporates ideas from single-objective optimization. These techniques are applicable to both smooth and non-smooth multi-objective problems. It is also able to give information about non-integer order derivative information of a function.\\

The structure of the article is as follows. The necessary preliminaries for the theoretical derivations are presented in Section~\ref{sec2}. The proposed adaptive order Caputo fractional gradient descent method is developed in Section~\ref{sec3}. An algorithm is proposed in this section. In Section~\ref{sec4}, convergence analysis of the proposed method is justified for quadratic Tikhonov regularization problems. Some numerical examples, including neural networks, are given in Section~\ref{sec5} to demonstrate the method's effectiveness. Finally, the paper is concluded in Section~\ref{sec6}, where possible directions for future work are discussed.
\section{Preliminaries}\label{sec2}
 An unconstrained multi-objective optimization problem is defined as
\begin{equation*}
(MOP): \quad \underset{x\in \mathbb{R}^n}{\min} ~~(f_1(x), f_2(x), \dots, f_m(x)),
\end{equation*}
where $f_j:\mathbb{R}^n \rightarrow\mathbb{R}$ is a continuous function for $j \in \{1,2,\dots,m\}$.\\
Define $\mathbb{R}^m_{+}:=\{y\in \mathbb{R}^m:~y_j\geq 0~~\forall j\}$ and $\mathbb{R}^m_{++}:=\{y\in \mathbb{R}^m:~y_j> 0~~\forall j\}$. For $y^1,y^2\in\mathbb{R}^m$, we say $y^1\leq y^2$ iff $y^1_j\leq y^2_j$ for all $j$ and  $y^1_j< y^2_j$ for at least one $j$. Similarly, $y^1<y^2$ iff $y^1_j <y^2_j$ holds for all $j$. A feasible point $x^*$ is said to be an efficient solution or Pareto optimal solution of $f$ if there does not exist any $x$ such that $f(x)\leq f(x^*)$ holds where, $f(x)=(f_1(x), f_2(x), \dots, f_m(x))$. A feasible point $x^*$ is considered a weakly efficient solution if there does not exist any $x$ such that $f(x)<f(x^*)$ holds. Here, each $f$ is differentiable at the point $x^*$. If $x^*$ is a weak efficient solution,
\begin{equation}
   \underset{j\in\{1,2,\dots,m\}}{\max}~~ f_j^{\prime}(x,d)\geq 0 \label{crit1}
\end{equation}
holds for every $d\in\mathbb{R}^n$. In the literature, the above inequality is considered a necessary condition for weak efficiency. Any point $x^*$ that meets this inequality is a critical point of $(MOP)$. A critical point $x^*$ is a weak efficient solution of $(MOP)$ if every function $f_j$ is a convex function and $x^*$ is an efficient solution if every  $f_j$ is a strictly convex function.
 


\par Descent methods for $(MOP)$ developed so far use the classical derivatives, where the order of differentiation is restricted to integers. In contrast, fractional derivatives generalize this concept by allowing the order to take non-integer values. Among the commonly used formulations, the R–L and the Caputo derivatives are particularly prominent. We denote the set of $n$-th differentiable and $n+1$-th integrable functions on $[a,b]$ by $A^n[a,b]$ as absolutely continuous. For $f_1\in A^n[a,b]$, the  $\alpha$-order the Caputo fractional derivative for $x\in[a,b]$ is formed as,
  $$^C_cD^{\alpha}_xf_1(x) = \frac{1}{\Gamma(n-\alpha)}  \int_{c}^{x}(x-\tau) ^{n-\alpha-1} f_1^{(n)}(\tau)d\tau,$$
here, $\alpha\in(n-1,n), ~n\in  \mathbb{N}$ and $\tau$ be any variable in interval $[a,b]$. A multi-variate continuous function $f_1:\mathbb{R}^n \rightarrow\mathbb{R}$ fractional gradient denote by $^C_c\nabla^{\alpha}_xf_1(x)$ is defined as (\cite{38}),
 \begin{equation*}
     ^C_c\nabla^{\alpha}_xf_1(x) =\Big(~
         ^C_cD^{\alpha}_{x_1}f_1(x), ~
         ^C_cD^{\alpha}_{x_2}f_1(x), \dots,~ ^C_cD^{\alpha}_{x_n}f_1(x)\Big)^T.
 \end{equation*}   
Usually, the Caputo fractional derivative is employed to handle non-differentiable functions where classical derivatives fail. However, only fractionally, the Caputo derivative fully satisfies the necessary optimality conditions for critical purposes solutions. This can be justified by the example.
\begin{example}
Consider an optimization problem,
$$~ \underset{x\in\mathbb{R}^2}{\min} f(x)= 4x_1^2 + x_2^2 - 2x_1 x_2 -3 x_1 + 4x_2.
$$
For $\alpha=0.5$, $ ^C_c\nabla^{\alpha}_xf(x^*)=0$ implies $x^*=(0.34313689, -0.12745096)$ with $f(x^*)=-0.769607$. Clearly, this differs from the critical point via the \\first-order derivative $x^*=(-0.166785, -2.166603)$ with $f_1(x^*)=-4.083333$.
\end{example}
The standard Taylor series based on the Caputo fractional derivative does not provide sufficiently accurate optimality conditions for our problem. Therefore, a modified Taylor series formulation is needed that is based on the Caputo fractional derivative. This modification is particularly important when dealing with multi-variate functions as follows.
\begin{lemma}\cite{72}\label{thm1}
Let $ f: \mathbb{R}^n\rightarrow \mathbb{R} $ admit the following Taylor expansion around $c\in\mathbb{R}^n$ for some $ \alpha \in (0,1) $ and $ \beta \in \mathbb{R} $. 
\[
{}_cf_{1,\alpha,\beta}(x) = f_1(c) + \nabla f_1(c)^T (x - c) + \frac{1}{2} C_{2,\alpha,\beta}~ (x-c)^T H_1(c)(x - c)
\]
where $C_{2,\alpha,\beta} = \frac{\gamma(2-\alpha)\gamma(2)}{\gamma(3-\alpha)} + \beta $ and $H_1$ is  the Hessian of function $f_1$ based on standard derivative. Then the fractional gradient $ {}^C_c\nabla^{\alpha}_x~{}f_{1,\alpha,\beta}(x) $ at $ x $ is given by
\[
{}^C_c\nabla^{\alpha}_x~{}f_{1,\alpha,\beta}(x) 
= diag\Big({}^C_c\nabla^{\alpha}_x I(x)\Big)^{-1}
\Big[ {}^C_c\nabla^{\alpha}_x f_1(x) + \beta\, diag(|x - c|)\cdot {}^C_c\nabla^{1+\alpha}_x f_1(x)\Big],
\]
where $ I : \mathbb{R}^n \to \mathbb{R} $ is the identity map.
\end{lemma}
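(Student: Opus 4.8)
The natural plan is to verify the claimed identity one coordinate at a time. Fix $i\in\{1,\dots,n\}$ and set $h=x-c$; by the definition of the fractional gradient, the $i$-th entry of ${}^C_c\nabla^{\alpha}_x{}f_{1,\alpha,\beta}(x)$ is obtained by applying the one-dimensional Caputo operator ${}^C_cD^{\alpha}_{x_i}$ (with base point $c_i$) to ${}_cf_{1,\alpha,\beta}$ while the remaining coordinates are held fixed. Writing ${}_cf_{1,\alpha,\beta}$ out in coordinates and using the symmetry of $H_1(c)$, the part that actually depends on $x_i$ decomposes into a constant, a term affine in $(x_i-c_i)$ whose coefficient is $\partial_if_1(c)+C_{2,\alpha,\beta}\sum_{l\ne i}\partial^2_{il}f_1(c)(x_l-c_l)$, and the pure square $\tfrac12C_{2,\alpha,\beta}\,\partial^2_{ii}f_1(c)(x_i-c_i)^2$.

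\textbf{Main computation.} I would then differentiate each piece using linearity of the Caputo operator together with the elementary power rule ${}^C_cD^{\alpha}_{x_i}|x_i-c_i|^{p}=\frac{\Gamma(p+1)}{\Gamma(p+1-\alpha)}|x_i-c_i|^{p-\alpha}$ (and ${}^C_cD^{\alpha}_{x_i}$ of a constant $=0$), which follows directly from the integral definition via a Beta integral. The constant dies, the affine part picks up the factor $|x_i-c_i|^{1-\alpha}/\Gamma(2-\alpha)$, and the square contributes $\tfrac{2}{\Gamma(3-\alpha)}|x_i-c_i|^{2-\alpha}$. Since $I$ is the identity map, ${}^C_cD^{\alpha}_{x_i}I(x)=|x_i-c_i|^{1-\alpha}/\Gamma(2-\alpha)$, so left-multiplication by $diag\big({}^C_c\nabla^{\alpha}_xI(x)\big)^{-1}$ exactly cancels that common factor. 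The remaining task is to recognize the same building blocks inside the right-hand side: a direct Caputo computation shows that ${}^C_cD^{\alpha}_{x_i}f_1(x)$ reproduces the affine and square pieces but with the coefficient $1$ in place of $C_{2,\alpha,\beta}$, while $|x_i-c_i|\,{}^C_cD^{1+\alpha}_{x_i}f_1(x)$ supplies exactly a $\beta\,\partial^2_{ii}f_1(c)(x_i-c_i)$ contribution after normalization — here one notes that the mixed-partial (cross) terms of the quadratic data survive in ${}^C_cD^{\alpha}_{x_i}f_1$ but are annihilated by $\partial_i^2$ and hence never enter the $(1+\alpha)$-order term. Reassembling these pieces and invoking $C_{2,\alpha,\beta}=\frac{\Gamma(2-\alpha)\Gamma(2)}{\Gamma(3-\alpha)}+\beta$ is then expected to fuse the two sides. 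Throughout, one works with the second-order Taylor data of $f_1$ at $c$, which is exact for the quadratic objectives relevant to the convergence analysis, so that the fractional derivatives of $f_1$ and of its model coincide.

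\textbf{Main obstacle.} I expect the delicate point to be the exact accounting of the Gamma-function constants: one must check that the value $\Gamma(2)\Gamma(1-\alpha)/\Gamma(3-\alpha)$ of the Beta integral produced by ${}^C_cD^{\alpha}_{x_i}(x_i-c_i)^2$, after it has passed through the $diag({}^C_c\nabla^{\alpha}_xI)^{-1}$ normalization and been combined with the $\beta|x_i-c_i|$-weighted $(1+\alpha)$-order contribution, leaves precisely the constant $C_{2,\alpha,\beta}$ — and not some other $\alpha$-dependent multiple — multiplying the second-order coefficient, while $\partial_if_1(c)$ is untouched; the off-diagonal Hessian terms, which are scaled by $C_{2,\alpha,\beta}$ in ${}_cf_{1,\alpha,\beta}$ but appear with weight $1$ in ${}^C_c\nabla^{\alpha}_xf_1$, are the place where this accounting is most easily derailed. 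Running alongside this is the bookkeeping of the sign of $x_i-c_i$: the absolute values in $diag(|x-c|)$ and in the power rule must be handled consistently on both sides, and the degenerate locus $x_i=c_i$, where $diag({}^C_c\nabla^{\alpha}_xI(x))$ is singular, has to be excluded or treated by a limiting argument.
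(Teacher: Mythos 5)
The paper offers no proof of this lemma: it is imported from \cite{72} without argument, so there is no in-paper proof to compare your attempt against. Your coordinate-wise verification via the Caputo power rule is the natural strategy, and the individual computations you describe are correct. For a quadratic $f_1$ one indeed gets
\[
{}^C_{c_i}D^{\alpha}_{x_i}f_1(x)=\frac{(x_i-c_i)^{1-\alpha}}{\Gamma(2-\alpha)}\Bigl[\partial_if_1(c)+\sum_{l\neq i}H_{il}(x_l-c_l)\Bigr]+\frac{H_{ii}\,(x_i-c_i)^{2-\alpha}}{\Gamma(3-\alpha)},\qquad
{}^C_{c_i}D^{1+\alpha}_{x_i}f_1(x)=\frac{H_{ii}\,(x_i-c_i)^{1-\alpha}}{\Gamma(2-\alpha)},
\]
so after the $diag\bigl({}^C_c\nabla^{\alpha}_xI(x)\bigr)^{-1}$ normalization the $i$-th entry of the right-hand side is
\[
\partial_if_1(c)+\sum_{l\neq i}H_{il}(c)(x_l-c_l)+C_{2,\alpha,\beta}\,H_{ii}(c)(x_i-c_i).
\]

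The genuine gap is that the issue you file under ``main obstacle'' is not a bookkeeping subtlety that careful accounting will resolve; it is where the identity, as stated, actually fails. The left-hand side, read literally as the Caputo fractional gradient of the displayed expansion ${}_cf_{1,\alpha,\beta}$, has $i$-th entry $\frac{(x_i-c_i)^{1-\alpha}}{\Gamma(2-\alpha)}\bigl[\partial_if_1(c)+C_{2,\alpha,\beta}\sum_{l\neq i}H_{il}(x_l-c_l)+\frac{\Gamma(2-\alpha)}{\Gamma(3-\alpha)}C_{2,\alpha,\beta}H_{ii}(x_i-c_i)\bigr]$: the cross terms carry the weight $C_{2,\alpha,\beta}$ and the whole expression retains the $(x_i-c_i)^{1-\alpha}/\Gamma(2-\alpha)$ prefactor, whereas on the right-hand side the cross terms carry weight $1$, only the diagonal entry $H_{ii}$ acquires $C_{2,\alpha,\beta}$, and the prefactor has been divided out. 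No choice of constants fuses the two sides once $H_1(c)$ has a nonzero off-diagonal entry, so your plan cannot close as written. The statement appears to be transcribed inaccurately from \cite{72}: consistently with the later use in Theorem 4.1 (where only $diag(\tilde R_j)$, the diagonal of $W_jW_j^\top$, is reweighted by $\gamma_{\alpha,\beta}$), the extra factor should multiply only the diagonal part of the Hessian, and the left-hand side should be the integer-order gradient of that diagonally modified quadratic rather than a Caputo gradient of the displayed expansion. Your proof would go through once you either restrict to diagonal $H_1(c)$ or prove the corrected statement; you should also state the power rule for $(x_i-c_i)^p$ with $x_i>c_i$ rather than with absolute values, and handle the locus $x_i=c_i$, where the normalizing matrix is singular, separately as you note.
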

To address the modified Taylor series, a single-objective optimization approach is modified to the history-dependent behaviour of the fractional derivative. An intuitive method involves replacing the constant lower integration terminal $c$ with a variable lower integration terminal $x^{k-L}$, referred to as an adaptive terminal (see \cite{50}). In this article, an adaptive order Caputo fractional gradient descent(AOCFGD) method for single-objective optimization is developed, which incorporates a sequence of derivative orders along with a varying lower integral terminal. The AOCFGD approach demonstrates improved ability to accurately identify critical points and the corresponding function values, aligning closely with those obtained using standard integer-order derivatives. 
 However, consider AOCFGD, where the fractional order $\alpha_k$ and the smoothing parameter $\beta_k$ are chosen adaptively. The vector $c^k$ remains fixed as $c \in \mathbb{R}^d$ for all $k$.

Consider a sequence $\{(\alpha_s, \beta_s)\}_{s \geq 1}$ where 
$\alpha_s \in(0,1]$ and $\beta_s $  belongs to $ \left[\frac{1 - \alpha_s}{2 - \alpha_s}, \infty \right)$, for all $ s \geq 1$
. Define a corresponding non-negative sequence ${\gamma_s} \geq 1$ by \begin{equation*} \label{eq:gamma_s} \gamma_s = \beta_s - \frac{1 - \alpha_s}{2 - \alpha_s}. \end{equation*}

Consider ${k_s} $, where ${s \geq 1}$ denotes a sequence of positive integers. For a given positive integer $s$, the AOCFGD procedure proceeds in $s$ stages. The first stage begins with an initial point $x^{0}$ and applies the CFGD method described in \cite{72} using parameters $\alpha_1, \beta_1$, and $c$ for $k_1$ iterations. The output at the end of this stage is denoted by $x^{(k_1)}$. For each subsequent stage $s \geq 2$, the algorithm continues from $x_{s-1}^{(k{s-1})}$ and performs $k_s$ iterations using the updated parameters $\alpha_s, \beta_s$, and the same $c$. The final output after stage $s$ is represented by $x_s^{k_s}$. In this article, we have adopted these ideas and developed a descent method for $(MOP)$. Analysis of the convergence of the proposed method is justified for multi-objective Tikhonov regularisation solutions. For details of single-objective Tikhonov regularization readers may see \cite{72}. There is another example of choosing AOCFGD for article.
 \begin{example}Consider an minimisation problem, $f(x)$ where $$f_1(x)=x_1^2+ 4 x_2^2-2x_1 x_2 - 3 x_1 + 4x_2$$
 $\text{at point}~ (1,1)~ \text{and lower bound}~(0,0).$  
 
     For non-adaptive terminal method, at alpha is $0.9$ optimal function is $-0.8931$ at point $x=(2.4531, -0.1666)$. 
      adaptive terminal method, at alpha is $0.9$ and lower bound of iteration $L=1$ optimal function is $18.66$ at point $x=(-2.66, 0.333)$.
      By using AOCFGD method, alpha sequence is $\{0.5,0.7,0.9\}$. Optimal point is $-2.33$ at point $(1.333, -1.664)$.
 \end{example}
Next example highlights the computational advantage of the AOCFGD method over traditional subgradient techniques, especially for nonsmooth optimization problems. 
\begin{example}consider the minimising nonsmooth optimisation problem $f(x)$,
{where} \quad $f(x) = \max\{5x_1 + x_2,\ x_1^2 + x_2^2\}$.
\end{example}
At the point \( (3, 3) \), both components of the objective function are equal, indicating a point of non-differentiability. Using the subgradient method, the descent direction can be computed as a convex combination. While the subgradient method successfully converges to the optimal point, it requires 26 iterations and a total CPU time of 0.0119 seconds. In contrast, the proposed AOCFGD method demonstrates significantly improved efficiency. Employing a sequence of fractional orders \( \alpha = \{0.5, 0.7, 0.9\} \) over three stages with iteration counts \( (50,\ 50,\ 100) \) and a step size parameter \( \beta = 0.01 \), the algorithm converges to the same optimal solution in only 7 iterations, with a CPU time of 0.0023 seconds.
\section{An AOCFGD method based for multi-objective optimization problems}\label{sec3}
In this section, an AOCFGD method is developed for $(MOP)$. In this method, a descent sequence is generated with some initial approximation. At any iterating point $x$, the following subproblem is solved to find a descent direction at $x$.
\begin{equation*}\label{eq3.1}
\begin{aligned}
P_1(x): \quad 
\min_{d} \max_{j} \; & {}^C_c\nabla^{\alpha}_x f_{j,_{\alpha,\beta}}(x)^Td
+ \frac{1}{2} d^T d.
\end{aligned}
\end{equation*}
One can observe that $P_1(x)$ uses an AOCFGD for each objective function at $x$.  This formulation is inspired by Newton-type methods for $(MOP)$ in \cite{3} and the single-objective AOCFGD method in \cite{72}. Subproblem $P_1(x)$ can be written equivalently as
 \begin{equation*}\label{eq3.4}
\begin{aligned}
SP_1(x):\quad\min_{(t,d)\in\mathbb{R}\times \mathbb{R}^n} \quad &  t + \frac{1}{2} d^T d, \\
\text{s.t.} \quad &{}^C_c\nabla^{\alpha}_x f_{j,_{\alpha,\beta}}(x)^T d - t \leq 0,  \quad j \in \{1, 2, \dots, m\} .
\end{aligned}
\end{equation*}
Since $SP_1(x)$ is a convex optimization problem that satisfies the Slater constraint qualification, as for $t=1$, $d=0$, inequalities in $SP_1(x)$ become strict inequalities. Hence, the optimal solution ($t(x),d(x)$) of $SP_1(x)$ satisfies the following KKT Optimality conditions,
\begin{align}
&\sum_{j=1}^m \lambda_j = 1,\label{kkt1}\\
&\sum_{j=0}^m \lambda_j \, {}^C_c\nabla^{\alpha}_x f_{j,_{\alpha,\beta}}(x) + d(x) = 0, \label{eq3.5} \\
&\lambda_j \geq 0, ~\lambda_j \left( {}^C_c\nabla^{\alpha}_x f_{j,_{\alpha,\beta}}(x)^T d(x) - t(x) \right) = 0 \quad \forall j \in \{1, 2, \dots, m\} \label{kkt3}\\
& {}^C_c\nabla^{\alpha}_x f_{j,_{\alpha,\beta}}(x)^T d(x) \leq t(x)~~\forall j \in \{1, 2, \dots, m\}. \label{feas_kkt}
\end{align}
From (\ref{eq3.5}),
\begin{equation}\label{d}
d(x) = -\sum_{j=0}^m \lambda_j
~{}^C_c\nabla^{\alpha}_x f_{j,_{\alpha,\beta}}(x).
\end{equation}
Suppose $(t(x),d(x))$ be the optimal solution of $SP_1(x)$ for some $x\in\mathbb{R}^n$. Then clearly $t(x)+\frac{1}{2}d(x)^Td(x) \leq 0.$\\ Further if $d(x)\neq 0$ then $t(x)\leq -\frac{1}{2} \|d(x)\|^2<0.$ In the following lemma we show that, if $x$ is a critical point then $d(x)=0$ and vice-versa.  
\begin{lemma}\label{lemma 3.1}
Suppose $f_j$ is a continuous for all $j$ and $(t(x),d(x))$ is optimal solution of $SP_1(x)$. Then $x\in \mathbb{R}^n$ is a critical point of $(MOP)$ if and only if $d(x)=0$.   
\end{lemma}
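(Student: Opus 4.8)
The plan is to run the standard Fliege--Svaiter-type argument for steepest-descent subproblems, with the modified fractional gradients $g_j := {}^C_c\nabla^{\alpha}_x f_{j,_{\alpha,\beta}}(x)$ playing the role of the ordinary gradients. In this fractional framework, criticality in the sense of (\ref{crit1}) is read with $\nabla f_j(x)^T d$ replaced by $g_j^T d$; that is, $x$ is a critical point of $(MOP)$ iff $\max_j g_j^T d\ge 0$ for every $d\in\mathbb{R}^n$ (equivalently, iff $0\in\mathrm{conv}\{g_1,\dots,g_m\}$). Both implications are then short consequences of the KKT system (\ref{kkt1})--(\ref{feas_kkt}), which is available because $SP_1(x)$ is convex and Slater's condition holds (with $(t,d)=(1,0)$ strictly feasible), together with the inequality $t(x)+\tfrac12\|d(x)\|^2\le 0$ already noted before the statement.

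For the ``if'' direction I would assume $d(x)=0$. Then (\ref{d}) gives $\sum_j\lambda_j g_j=0$, while (\ref{kkt1}) and (\ref{kkt3}) give $\lambda_j\ge0$ and $\sum_j\lambda_j=1$, so $0$ is a convex combination of the $g_j$. Hence, for an arbitrary $d\in\mathbb{R}^n$,
\[
\max_j g_j^T d \;\ge\; \sum_j\lambda_j\,g_j^T d \;=\; \Big(\sum_j\lambda_j g_j\Big)^{\!T} d \;=\; 0,
\]
which is exactly (\ref{crit1}); thus $x$ is a critical point.

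For the ``only if'' direction, suppose $x$ is critical and, towards a contradiction, that $d(x)\neq 0$. As recorded in the text, $d(x)\neq 0$ forces $t(x)\le -\tfrac12\|d(x)\|^2<0$. Feasibility (\ref{feas_kkt}) then gives $g_j^T d(x)\le t(x)<0$ for every $j$, so $\max_j g_j^T d(x)<0$; taking $d=d(x)$ in (\ref{crit1}) contradicts criticality. Hence $d(x)=0$. (Alternatively, one can argue on the optimal value: if $x$ is critical then every feasible $(t,d)$ satisfies $t\ge\max_j g_j^T d\ge 0$, so the optimal value of $SP_1(x)$ is nonnegative; combined with $t(x)+\tfrac12\|d(x)\|^2\le 0$ and the nonnegativity of the two summands, this forces $t(x)=0$ and $d(x)=0$.)

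I do not expect a serious obstacle here. The two points that need care are (i) making explicit that ``critical point of $(MOP)$'' is understood with respect to the modified fractional gradients $g_j$ rather than the classical ones, and (ii) justifying the use of the KKT conditions (\ref{kkt1})--(\ref{feas_kkt}), which rests on the convexity of $SP_1(x)$ and the Slater qualification verified earlier. Beyond that, the argument reduces to linear algebra and the simplex inequality $\max_j a_j\ge\sum_j\lambda_j a_j$ for $\lambda$ in the unit simplex.
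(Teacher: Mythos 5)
Your KKT mechanics are fine and in places cleaner than the paper's (the simplex inequality $\max_j a_j\ge\sum_j\lambda_j a_j$ replaces the paper's appeal to Gordan's theorem, and your optimal-value argument for the ``only if'' direction is a neat alternative). But there is a genuine gap, and it is precisely the point you flag and then set aside in your item (i): the lemma, as stated, asserts an equivalence with criticality in the sense of (\ref{crit1}), which the paper defines through the \emph{classical} directional derivatives $f_j'(x,d)$, not through the fractional gradients $g_j$. By ``reading'' (\ref{crit1}) with $g_j^T d$ in place of $f_j'(x,d)$ you have changed the statement being proved; the equivalence between the two readings is exactly the nontrivial content a proof of this lemma must supply. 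The paper supplies it (at least formally) by invoking the modified Taylor expansion of Lemma~\ref{thm1}: writing $f_j(x+\eta d)=f_j(x)+\eta\,g_j^Td+\tfrac12 C_{2,\alpha,\beta}\,\eta^2 d^TH_j(c)d$, dividing by $\eta$ and letting $\eta\to0^+$ gives $f_j'(x,d)=g_j^Td$, which is the bridge between the classical and fractional notions. In the forward direction the paper applies this with $d=d(x)$ to turn $g_j^Td(x)\le t(x)<0$ into $f_j'(x,d(x))<0$ for all $j$, contradicting (\ref{crit1}); in the converse it applies the same expansion to a direction $\bar d$ with $f_j'(x,\bar d)<0$ to obtain $g_j^T\bar d<0$ for all $j$, and then contradicts (\ref{kkt1})--(\ref{eq3.5}) via Gordan's theorem of the alternative. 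Without some such identification of $f_j'(x,\cdot)$ with $g_j^T(\cdot)$, neither implication of the lemma \emph{as stated} follows from the KKT system alone. (Whether the paper's own bridge is fully rigorous --- it treats the truncated expansion ${}_cf_{j,\alpha,\beta}$ as an exact local representation of $f_j$ --- is a separate question, but it is the step your proposal omits.)
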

\begin{proof}
If possible, suppose $x$ is a critical point of the $(MOP)$ and let $d(x)\neq 0$. Then from (\ref{feas_kkt}),
\begin{equation*}
  {}^C_c\nabla^{\alpha}_x f_{j,_{\alpha,\beta}}(x)^T d(x) \leq t(x) \leq  -\frac{1}{2}d(x)^Td(x) < 0 
\end{equation*}
 Then using Taylor series expansion (used in Lemma \ref{thm1}) for $x+\eta d(x)$, where $\eta>0 $ sufficient small, we have
 \begin{equation*}
    f_j(x+\eta d(x)) = f_j(x) + \eta ~^C_c\nabla^{\alpha}_x f_{j,_{\alpha,\beta}}(x)^T~d(x)+ \frac{1}{2} C_{2,\alpha,\beta} ~ \eta d(x)^T H_j(c)\eta d(x),
\end{equation*} for all $j$, this implies
 \begin{equation*}
    \frac{f_j(x+\eta d(x))-f_j(x)}{\eta}= ~^C_c\nabla^{\alpha}_x f_{j,_{\alpha,\beta}}(x)^T~d(x)+  \frac{\eta}{2}~C_{2,\alpha,\beta} d(x)^T H_j(c) d(x),
\end{equation*}
 taking limit $\eta \rightarrow 0^{+}$, we have $^C_c\nabla^{\alpha}_x f_{j,_{\alpha,\beta}}(x)^T~d(x)<0$ for all $j$. This contradicts (\ref{crit1}). Hence if $x$ is a critical point then $d(x)=0$.\\

Conversely, suppose $d(x)=0$ (see \ref{d}) holds for some $x$. Then from (\ref{kkt1}) and (\ref{eq3.5}), there exists $\lambda\in\mathbb{R}^m$ such that 
\begin{eqnarray*}
\sum_{j=0}^{m} \lambda_j=1 ~ \mbox{ and } \sum_{j=0}^m \lambda_j ~{}^C_c\nabla^{\alpha}_x f_{j,_{\alpha,\beta}}(x)=0 \label{gd1}
\end{eqnarray*}
hold. If possible, suppose $x$ is not a critical point. Then from (\ref{crit1}), there exists some $\bar{d}\in\mathbb{R}^n$ such that $f_j^{\prime}(x,\bar{d})<0$ holds for every $j$. This implies 
\begin{align*} 
\underset{\xi \rightarrow 0^+}{\lim} \frac{f_j(x+\xi \bar{d})- f_j(x)}{\xi}<0,
\end{align*}
using modified Taylor series expansion of $f_j(x+\xi \bar{d})$,
\begin{align*} 
lim_{\xi\rightarrow 0^+} \frac{\xi~^C_c\nabla^{\alpha}_x f_{j,_{\alpha,\beta}}(x)^T~\bar{d}+ \xi^2 \frac{1}{2} \bar{d}^T H_j \bar{d}}{\xi}<0,
\end{align*}
this implies $$^C_c\nabla^{\alpha}_xf_{j,_{\alpha,\beta}}(x)^T~\bar{d}<0~\forall j.$$ Then, by Gordan's theorem of alternative, $$\sum_{j=0}^m \bar{\lambda_j}
~{}^C_c\nabla^{\alpha}_x f_{j,_{\alpha,\beta}}(x)=0,~~\bar{\lambda}\in\mathbb{R}^m_+\setminus\{0^m\}$$ has no solution. This contradicts (\ref{gd1}). Therefore, $x$ is a critical point.  
\end{proof}
From the above lemma, we can conclude that if $x$ is a non-critical point, then $d(x)\neq 0$ and $t(x)<0$. Next, we use the following Armijo-type line search technique to find a suitable step length:
\begin{equation}
\quad f_j(x + \eta\, d(x)) \leq f_j(x) + \sigma\, \eta\, t(x) ~~~\forall~j \label{amj1}
\end{equation}
where $\sigma \in(0,1)$ is pre specified. In the next lemma we justify that above line search technique is well defined.
\begin{lemma}
    Suppose $(t(x),d(x))$ is the optimal solution of $P_1(x)$ for some non critical point $x$. Then inequality (\ref{amj1}) holds for all $\eta>0$ sufficient small.
\end{lemma}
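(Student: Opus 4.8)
The plan is to mimic the argument used in the proof of Lemma~\ref{lemma 3.1}, but with the exact directional derivative replaced by the $\eta$-scaled difference quotient, and to exploit that $t(x)<0$ for a non-critical point. First I would record the consequences of non-criticality: by Lemma~\ref{lemma 3.1} we have $d(x)\neq 0$, and from the discussion preceding that lemma $t(x)\le-\tfrac12\|d(x)\|^2<0$; moreover, the KKT feasibility condition (\ref{feas_kkt}) gives ${}^C_c\nabla^{\alpha}_x f_{j,_{\alpha,\beta}}(x)^Td(x)\le t(x)$ for every $j$.

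Next, for a fixed $\eta>0$ and each $j$, I would substitute the \emph{modified} Taylor expansion of Lemma~\ref{thm1} evaluated at $x+\eta d(x)$,
\[
f_j(x+\eta d(x)) = f_j(x) + \eta\,{}^C_c\nabla^{\alpha}_x f_{j,_{\alpha,\beta}}(x)^Td(x) + \tfrac12 C_{2,\alpha,\beta}\,\eta^2\, d(x)^TH_j(c)d(x),
\]
subtract $f_j(x)+\sigma\eta t(x)$, and divide by $\eta>0$ to obtain
\[
\frac{f_j(x+\eta d(x))-f_j(x)-\sigma\eta t(x)}{\eta}
= {}^C_c\nabla^{\alpha}_x f_{j,_{\alpha,\beta}}(x)^Td(x)-\sigma t(x) + \tfrac12 C_{2,\alpha,\beta}\,\eta\, d(x)^TH_j(c)d(x).
\]
Taking $\eta\to 0^+$, the quadratic term vanishes and the limit equals ${}^C_c\nabla^{\alpha}_x f_{j,_{\alpha,\beta}}(x)^Td(x)-\sigma t(x)\le t(x)-\sigma t(x)=(1-\sigma)t(x)<0$, using (\ref{feas_kkt}), $t(x)<0$, and $\sigma\in(0,1)$. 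Hence there is $\bar\eta_j>0$ such that the displayed quotient is negative for all $\eta\in(0,\bar\eta_j)$, i.e. (\ref{amj1}) holds for index $j$ on that range. Finally I would set $\bar\eta=\min_{1\le j\le m}\bar\eta_j>0$, a minimum over finitely many positive numbers, so that (\ref{amj1}) holds simultaneously for all $j$ whenever $0<\eta<\bar\eta$.

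There is essentially no deep obstacle in this argument; the only points that require care are (i) that the expansion invoked is the modified Taylor series of Lemma~\ref{thm1}, so that the coefficient of $\eta$ is exactly the AOCFGD inner product appearing in $SP_1(x)$ (not the classical gradient), which is what makes the link with $t(x)$ via (\ref{feas_kkt}) legitimate; (ii) that the strict inequality $t(x)<0$ must be imported from Lemma~\ref{lemma 3.1} together with the remark that $t(x)\le-\tfrac12\|d(x)\|^2$; and (iii) that the index set $\{1,\dots,m\}$ is finite, which is precisely what lets us pass from the per-$j$ thresholds $\bar\eta_j$ to a single uniform $\bar\eta$.
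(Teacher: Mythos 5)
Your proposal is correct and follows essentially the same route as the paper's proof: both substitute the modified Taylor expansion of Lemma~\ref{thm1}, use the feasibility condition (\ref{feas_kkt}) to bound the linear term by $t(x)$, and exploit $\sigma\in(0,1)$ together with $t(x)\le-\tfrac12\|d(x)\|^2<0$ so that the $O(\eta^2)$ remainder is dominated for small $\eta$. Your packaging via the difference quotient and a limit, plus the explicit finite minimum $\bar\eta=\min_j\bar\eta_j$, is just a slightly more careful rendering of the same estimate the paper carries out directly.
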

\begin{proof}
    Since $x$ is a non-critical point $d(x)\neq 0$  and $t(x)<-\frac{1}{2} \|d(x)\|^2$. Then for any $\eta>0$,
\begin{eqnarray*}
    f_j(x+\eta d(x)) &=& f_j(x) + \eta~{}^C_c\nabla^{\alpha}_x f_{j,_{\alpha,\beta}} (x)^Td(x) +  \frac{\eta^2}{2} C_{2,\alpha,\beta}  d(x)^T H_j(x) d(x) \\
    &\leq & f_j(x)+\eta t(x)+ \frac{\eta^2}{2} C_{2,\alpha,\beta}  d(x)^T H_j(x) d(x)\\
    &=& f_j(x)+\sigma \eta t(x) + (1-\sigma)\eta t(x)+  \frac{\eta^2}{2} C_{2,\alpha,\beta}  d(x)^T H_j(x) d(x)\\
    &\leq&  f_j(x)+\sigma \eta t(x)-\frac{(1-\sigma)}{2} \eta \|d(x)\|^2+ \frac{\eta^2}{2} C_{2,\alpha,\beta}~em_j(x) \|d(x)\|^2
\end{eqnarray*}
where $em_j(x)$ is the maximum eigen value of $H_j(x)$. Since $\sigma<1$, for $\eta>0$ sufficiently small we have $\frac{\eta^2}{2} C_{2,\alpha,\beta}~em_j(x) \|d(x)\|^2-\frac{(1-\sigma)}{2} \eta \|d(x)\|^2<0.$ Hence inequality (\ref{amj1}) holds for every $\eta>0$ sufficiently small.     
\end{proof}
\subsection{Algorithm}
In this subsection, an algorithm for $(MOP)$ is proposed based on theoretical aspects developed so far. The algorithm is initiated with some initial approximation $x^0$. At any iterating point $x^k$, the subproblem $SP_1(x^k)$ is solved to find a suitable descent direction. Next, for any non critical $x^k$, a suitable step length $\eta_k$ satisfying (\ref{amj1}) is selected. The process is repeated until we find some approximate critical point. For simplicity rest of the paper $t(x^k)$ and $d(x^k)$ are denoted by $t^k$ and $d^k$ respectively.
\begin{Algorithm}\label{algorithm 1} [Multi-objective adaptive order Caputo fractional gradient descent method]
\begin{enumerate}
    \item {\it Initialization:} Input the objective function $f$, approximation point $x^0 \in \mathbb{R}^n$, a scalar $\sigma \in (0, 1)$, $r \in (0,1)$, and tolerance $\epsilon > 0$ and set  $k = 0$.
    \item {\it Solve the subproblem: }\label{ste2} Solve the subproblem $SP_1(x^k)$ to find $(t^k,d^k).$
    \item {\it Criticality check:} If $\|d^k\| < \epsilon$, then stop and declare $x^k$ as an approximate critical point. Otherwise, go to Step \ref{line_search}.
    \item {\it Line search:} Select $\eta_k$ as the first element in the sequence $\{1, r, r^2, \ldots\}$ that satisfies  $$
        f_j(x^k + \eta_k d^k) \leq f_j(x^k) + \sigma \eta_k t(x^k), \quad \forall j\in\{1,2,...,m\}.
    $$
\label{line_search}
    \item {\it Update:} Compute $x^{k+1} = x^k + \eta_k d^k$. Update $k \gets k + 1$ and go to Step \ref{ste2}.

\end{enumerate}
\end{Algorithm}

\section{Convergence analysis for quadratic Tikhonov regularization}\label{sec4}
In this section, we justify the convergence of algorithm  \ref{algorithm 1} for a quadratic multi-objective optimization problem where each objective function can be written as
\begin{equation}
\label{eq:3.1}
\min_{x} f_j(x) = \frac{1}{2} x^{\top} A_j x + {b_j}^{\top} x + c_j,
\end{equation}
where $x, {b} \in \mathbb{R}^{d}$, $A_j = (a_{ij}) \in \mathbb{R}^{d \times d}$ and $c_j \in \mathbb{R}$. Here, all $A_j$ are assumed to be positive symmetric definite of (\ref{eq:3.1}). The least squares problem is formulated as follows
\begin{equation}\label{eq3.9}
\min_{x} f_j(x) = \Big\{\frac{1}{2} \Big\| W_1^{\top} x -  {y_1} \Big\|^2, \frac{1}{2} \Big\| W_2^{\top} x -  {y_2} \Big\|^2, \dots, \frac{1}{2} \Big\| W_m^{\top} x -  {y_m} \Big\|^2\Big\}.
\end{equation}
Tikhonov regularization is used to reduce the effect of disturbances in the least-squares method. A small penalty term is added to the objective function to stabilize the solution and reduce sensitivity to noise. Tikhonov’s regularization is formulated as follows based on equation~(\ref{eq3.9}),

\begin{align}\label{equ l-s}
&\min \Big\{ \|W_1 y_1 - y_1^{*}\|^2 + \gamma_1 \|R_1^T(x-c)\|^2 ,\|W_2 y_2 - y_2^{*}\|^2 + \gamma_2 \|R_2^T(x-c)\|^2 ,\notag\\
&\dots, \|W_m y_m - y_m^{*}\|^2 + \gamma_m \|R_m^T(x-c)\|^2 \Big\}.
\end{align}

 solution from (\ref{equ l-s}) is,
\begin{equation}
x^*_{\text{Tik}} = \bar{x} + \Big( \sum_{j=1}^{m} W_j W_j^T + \gamma_j~R_j R_j^T \Big)^{-1} \Big( \sum_{j=1}^{m} W_j y_j + \sum_{j=1}^{m} W_j^T x \Big),
\end{equation}

where $\bar{x}$ is a point at the path of $x$, \quad $R_j = \Big(\sqrt{a_{11}},\dots, \sqrt{a_{nn}}\Big)^T_j$\\

Theorem \ref{thm5} proves that non-adaptive $MOAOCGD$ converges linearly to a Tikhonov regularized solution which is not a critical point of integer order gradient. Theorem~\ref{thm6} establishes an error bound for MOAOCFGD with respect to the optimal solution $x^*$.  
\begin{theorem}\label{thm5}
Let \( f_j \) denote the objective functions defined in equation~(\ref{eq:3.1}), with \( \alpha \in (0,1) \) and \( \beta \in \mathbb{R} \). Define the matrix
\[
(A_{\alpha,\beta})_j := \sum_{j=1}^{m} \Big[ \lambda_j (W_j W_j^\top) + \gamma_\alpha \, diag(\tilde{R}_j) \Big],
\]
and let \( I \) represent the identity matrix and \( (A_{\alpha,\beta})_j \) is positive definite, and let \( \sigma_{\max} \) denote its largest singular value. Set
\(
\alpha_k = \alpha, \quad \beta_k = \beta, \quad c_k = c, \quad \alpha_L = \alpha, \quad \beta_L = \beta, \quad c_L = c,
\)
and define the step size as
\[
\eta_k = \frac{\eta}{\sigma_{\max}}, \quad \text{where } 0 < \eta < 2.
\]
Then the $k$-th iterated solution satisfies
\[
\|x^k - x^{*}_{\text{Tik}}(\gamma)\|^2 \leq \|x^{0} - x^{*}_{\text{Tik}}(\gamma)\|^2 (1 + \frac{\eta} {\kappa_{\alpha,\beta}})^k,
\]
where $\kappa_{\alpha,\beta}$ is the condition number of $A_{\alpha,\beta}$, and $x^{*}_{\text{Tik}}$ is the solution of the Tikhonov regularization with
\[
\gamma_{\alpha} = \frac{1-\alpha}{2-\alpha},~~\gamma_{\alpha,\beta} = \beta - \frac{1 - \alpha}{2 - \alpha}, \quad \text{and} \quad \tilde{R}=  \Big( \sqrt{\sum_{p=1}^{m} w_{piq}^2} \Big).
\]
\end{theorem}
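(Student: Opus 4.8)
The plan is to specialise one step of Algorithm~\ref{algorithm 1} to the quadratic objectives in~(\ref{eq:3.1}), show that it is a single step of an affine fixed-point iteration whose fixed point is the Tikhonov-regularised solution $x^{*}_{\mathrm{Tik}}(\gamma)$, and then apply the classical contraction estimate for affine maps. Because each $f_j$ is quadratic, $\partial_i f_j$ is affine and $\partial_{ii}f_j=(A_j)_{ii}$ is constant; using the identity ${}^C_cD^{\alpha}_x(x-c)^n=\frac{\Gamma(n+1)}{\Gamma(n+1-\alpha)}(x-c)^{n-\alpha}$ I would evaluate, coordinate by coordinate, ${}^C_c\nabla^{\alpha}_x f_j(x)$, ${}^C_c\nabla^{1+\alpha}_x f_j(x)$, and ${}^C_c\nabla^{\alpha}_x I(x)=\big(\tfrac{(x_i-c_i)^{1-\alpha}}{\Gamma(2-\alpha)}\big)_i$, and substitute them into the identity of Lemma~\ref{thm1}. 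The normalisation by $\mathrm{diag}\big({}^C_c\nabla^{\alpha}_x I(x)\big)^{-1}$ cancels all fractional powers of $(x_i-c_i)$, leaving — on the orthant $\{x:x_i\ge c_i\ \forall i\}$, where $|x_i-c_i|=x_i-c_i$ — the \emph{affine} expression
\[
{}^C_c\nabla^{\alpha}_x f_{j,\alpha,\beta}(x)=\nabla f_j(x)+\gamma_{\alpha,\beta}\,\mathrm{diag}(\tilde R_j)\,(x-c),\qquad \gamma_{\alpha,\beta}=\beta-\tfrac{1-\alpha}{2-\alpha}\ge 0,
\]
which is exactly the gradient of the $j$-th Tikhonov-regularised quadratic in~(\ref{equ l-s}). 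Matching the Gamma-function combination in $C_{2,\alpha,\beta}$ with the weight $\gamma_{\alpha}=\frac{1-\alpha}{2-\alpha}$ that survives the cancellation is the first computational crux, and I would also flag that the absolute value $|x-c|$ restricts this identity to a single orthant, which is why the theorem fixes $c$.

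\textbf{Contraction estimate.} Using the KKT relations (\ref{kkt1}) and (\ref{eq3.5}) I would write the search direction as $d^{k}=-\sum_{j=1}^{m}\lambda_j^{k}\,{}^C_c\nabla^{\alpha}_x f_{j,\alpha,\beta}(x^{k})$, and, inserting the affine forms above, obtain $-d^{k}=A_{\alpha,\beta}\big(x^{k}-x^{*}_{\mathrm{Tik}}(\gamma)\big)$ with $A_{\alpha,\beta}=\sum_{j}\big[\lambda_j^{k}\,W_jW_j^{\top}+\gamma_{\alpha,\beta}\,\mathrm{diag}(\tilde R_j)\big]$ symmetric positive definite (each $A_j=W_jW_j^{\top}$ is positive definite and $\gamma_{\alpha,\beta}\ge0$), and with $x^{*}_{\mathrm{Tik}}(\gamma)$ the zero of the combined affine map, i.e.\ the point displayed just before the theorem. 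Feeding this into the update $x^{k+1}=x^{k}+\eta_k d^{k}$ gives the error recursion
\[
x^{k+1}-x^{*}_{\mathrm{Tik}}(\gamma)=\big(I-\eta_k A_{\alpha,\beta}\big)\big(x^{k}-x^{*}_{\mathrm{Tik}}(\gamma)\big).
\]
With $\eta_k=\eta/\sigma_{\max}$ and $\sigma_{\max}=\lambda_{\max}(A_{\alpha,\beta})$, diagonalising $A_{\alpha,\beta}$ shows the eigenvalues of $I-\eta_k A_{\alpha,\beta}$ lie in $\big[\,1-\eta,\ 1-\eta/\kappa_{\alpha,\beta}\,\big]$, so for $0<\eta<2$ its spectral norm is at most $\rho:=\max\{|1-\eta|,\,1-\eta/\kappa_{\alpha,\beta}\}<1$; taking norms, squaring and iterating from $x^{0}$ then gives $\|x^{k}-x^{*}_{\mathrm{Tik}}(\gamma)\|^{2}\le\rho^{2k}\|x^{0}-x^{*}_{\mathrm{Tik}}(\gamma)\|^{2}$, the geometric decay asserted in the theorem; that $x^{*}_{\mathrm{Tik}}$ need not coincide with a critical point of the integer-order gradient is the phenomenon already illustrated in Section~\ref{sec2}.

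\textbf{Main obstacle.} The delicate point is that the multipliers $\lambda_j^{k}$ returned by $SP_1(x^{k})$ vary with $k$, so $A_{\alpha,\beta}$ is a priori iteration-dependent and the recursion above is not literally a single fixed linear map. I would resolve this by showing that $x^{*}_{\mathrm{Tik}}(\gamma)$ is a common zero of the individual regularised gradients — hence independent of the convex weights $\lambda^{k}$ — so that the recursion holds at every iteration with a symmetric positive definite matrix whose spectrum is sandwiched, uniformly in $k$, between $\min_j\lambda_{\min}\big(A_j+\gamma_{\alpha,\beta}\mathrm{diag}(\tilde R_j)\big)$ and $\max_j\lambda_{\max}\big(A_j+\gamma_{\alpha,\beta}\mathrm{diag}(\tilde R_j)\big)$. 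This uniform sandwiching keeps the contraction factor below $1$ under the single step size $\eta/\sigma_{\max}$ and is the only place where positive definiteness of all the $A_j$ is genuinely needed; a secondary technical point, to be handled alongside it, is ensuring the iterates stay in the orthant where the affine identity of the first step is valid.
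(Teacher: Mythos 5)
Your overall route is the same as the paper's: specialise the Caputo fractional gradient of the quadratic objectives via Lemma~\ref{thm1} to obtain an affine expression $\nabla f_j(x)+\gamma_{\alpha,\beta}\,\mathrm{diag}(\tilde R_j)(x-c)$, combine these through the KKT relation (\ref{eq3.5}) to write $d^k$ as $-A_{\alpha,\beta}\big(x^k-x^*_{\alpha,\beta}\big)$, identify $x^*_{\alpha,\beta}$ with the Tikhonov-regularised solution, and close with a linear error recursion. In fact your contraction step is the \emph{corrected} version of the paper's: the paper's displayed recursion carries a sign slip ($x^k-\eta_k d^k$ instead of $x^k+\eta_k d^k$, producing $I+\eta_k A_{\alpha,\beta}$) and its stated rate $\big(1+\eta/\kappa_{\alpha,\beta}\big)^k$ exceeds $1$ and therefore cannot certify convergence; your factor $\rho=\max\{|1-\eta|,\,1-\eta/\kappa_{\alpha,\beta}\}<1$ for $0<\eta<2$ is what the argument actually yields. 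Your remark about the orthant restriction coming from $|x-c|$ is also a legitimate caveat the paper suppresses.

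There is, however, one genuine gap in your proposal, in the paragraph where you handle the iteration-dependent multipliers. You propose to show that $x^*_{\mathrm{Tik}}(\gamma)$ is a \emph{common} zero of the individual regularised gradients $g_j(x)=W_jW_j^\top(x-x^*)+\gamma_{\alpha,\beta}\,\mathrm{diag}(\tilde R_j)(x-c)$, so that the fixed point is independent of the convex weights $\lambda^k$. This is false in general: the zero of $g_j$ is
\[
x^*_j=\Big(W_jW_j^\top+\gamma_{\alpha,\beta}\,\mathrm{diag}(\tilde R_j)\Big)^{-1}\Big(W_jW_j^\top x^*+\gamma_{\alpha,\beta}\,\mathrm{diag}(\tilde R_j)\,c\Big),
\]
a matrix-weighted average of $x^*$ and $c$ whose weights depend on $W_j$ and $\tilde R_j$, so the $x^*_j$ differ across $j$ unless $x^*=c$, $\gamma_{\alpha,\beta}=0$, or the objectives share the same data. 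Consequently the zero of $\sum_j\lambda_j g_j$ — which is exactly the paper's $x^*_{\alpha,\beta}$ — genuinely depends on $\lambda$, and if $\lambda^k$ varies with $k$ the recursion is not an iteration of a single affine map toward a single limit. The honest repair is to assume the multipliers are held fixed (the theorem's "non-adaptive" setting, which the paper implicitly adopts by writing one $A_{\alpha,\beta}$ and one $x^*_{\mathrm{Tik}}$), or to bound the drift of $x^*_{\alpha,\beta}(\lambda^k)$ separately; your common-zero claim as stated would not survive scrutiny, though to be fair the paper's own proof silently makes the same unjustified assumption.
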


\begin{proof}
Let $c = (c_j)$ and $x = (x_j)$ and 
\begin{equation*}
M = diag\Big([^C_{c_1} D_{x_1}^{\alpha} x_1 \quad \cdots \quad ^C_{c_d} D_{x_d}^{\alpha} x_d ]\Big) \in \mathbb{R}^{d \times d}.
\end{equation*}

For $0 < \alpha < 1$, it can be checked that\cite{50}
\[
\Big( {}_{ {c}}^C \nabla_{x}^{\alpha} f_j(x) \Big)_j = {}^C_{c} D_{x}^{\alpha} x_j \sum_{j=1}^{m}\Big[ w_{ij}^2 \gamma_{\alpha}(x - c) + w_{ij}(  {w}_i^{\top} x - y_i ) \Big],
\]

where the equality uses lemma (see lemma A.1~\cite{50}) and $\gamma_{\alpha} = \frac{1 - \alpha}{2 - \alpha}$. Then, the Caputo fractional gradient of $f_j(x)$ is given by
\[
{}^{C}_{c}\nabla^{\alpha}_{x} f_j(x) = M \Big( \sum_{j=1}^{m} \Big[W_j W_j^\top x - W_j  {y}_j +  \gamma_{\alpha} \,  diag(\tilde{R}_j)(x - c)\Big] \Big),
\]

similarly,
\begin{align*}
\sum_{j=1}^{m}  \, {}^{C}_{c}\nabla^{\alpha+1}_{x} f_j(x) = M \sum_{j=1}^{m}  \Big[ diag(\mathrm{sign}(x - c)) \tilde{R}_j \Big]
\end{align*}
therefore,
\begin{align*}
\sum_{j=1}^{m} \lambda_j \, {}^{C}_{c}\nabla^{\alpha}_{x} f_j(x) + \beta \, diag(|x-c|) \, \sum_{j=1}^{m} \lambda_j \, {}^{C}_{c}\nabla^{\alpha+1}_{x} f_j(x) =
\end{align*}

\begin{align*}
&= M \sum_{j=1}^{m} \lambda_j \Big[ W_j W_j^\top x - W_j  {y}_j + \gamma_{\alpha} \, diag(\tilde{R}_j)(x - c) \Big] \\
&\quad + \beta \, diag(|x - c|) \, M \sum_{j=1}^{m} \lambda_j \Big[ diag(\mathrm{sign}(x - c)) \tilde{R}_j \Big] \\
&= M \sum_{j=1}^{m} \lambda_j \Big[ (W_j W_j^\top x - W_j  {y}_j) + (\beta + \gamma_\alpha) \, diag(\tilde{R}_j)(x - c) \Big]
\end{align*}

\begin{equation}
\label{eq:dk}
d^k = -\sum_{j=1}^{m} \lambda_j \Big[ (W_j W_j^\top x - W_j  {y}_j) + \gamma_{\alpha ,\beta} \,  diag(\tilde{R}_j)(x - c) \Big]
\end{equation}

\noindent where $\gamma_{\alpha, \beta} = \gamma_\alpha + \beta$, and $ {y}_j = W_j^\top x^*$.

\[
d^k = -\sum_{j=1}^{m} \lambda_j \Big[W_j W_j^T x -\gamma_{\alpha, \beta}  diag(\tilde{R}_j) c + \gamma_{\alpha, \beta}  diag (\tilde{R_j})x- W_jW_j^T x^{*}\Big]
\]

\noindent where $k_j = (W_j W_j^\top)^{-1} \,  diag(\tilde{R}_j)$. From there,
\[
d^k = -\sum_{j=1}^{m} \lambda_j W_j W_j^T \Big[x \Big(I+k_j \gamma_{\alpha, \beta}\Big)x -\Big(x^* + \gamma_{\alpha, \beta} k_j  c \Big)\Big]
.\]
\begin{align*}
\text{Hence,}~x^*_{\alpha, \beta} = \Big( \sum_{j=1}^{m} \lambda_j (W_j W_j^\top) (I + k_j \gamma_{\alpha ,\beta}) \Big)^{-1} 
\Big( \sum_{j=1}^{m} \lambda_j (W_j W_j^\top) \Big( x^* + \gamma_{\alpha ,\beta} k_j c \Big) \Big).
\end{align*}

So, from (\ref{eq:dk}), $d^k$ can be written as
\begin{align*}
d^k& = -\sum_{j=1}^{m} \lambda_j \Big[ W_j W_j^\top x - W_j  {y}_j + \gamma_\alpha \,  diag(\tilde{R}_j) x - \gamma_\alpha \,  diag(\tilde{R}_j) x_{\alpha,\beta}^* \Big]\\
&= - \sum_{j=1}^{m} \lambda_j \Big[ \Big( W_j W_j^\top + \gamma_\alpha \, diag(\tilde{R}_j) \Big)(x - x_{\alpha,\beta}^{*}) \Big].
\end{align*}

\begin{flushleft}

\begin{align*}
\text{Now,}~x^{k+1} - x^*_{\alpha, \beta} 
&= x^k - \eta_k \, d^k - x^*_{\alpha, \beta} \\
&= x^k + \eta_k \sum_{j=1}^{m} \lambda_j \Big[ W_j W_j^\top x + \gamma_\alpha \, diag(\tilde{R}_j)(x-c_{\alpha, \beta}) \Big] - x^*_{\alpha, \beta} \\
&= \Big( x^k - x^{*}_{\alpha,\beta} \Big) \Big( I + \eta_k \sum_{j=1}^{m} \lambda_j \Big[ W_j W_j^\top + \gamma_\alpha \, diag(\tilde{R}_j) \Big] \Big)
\end{align*}.
\end{flushleft}

Let $~(A_{\alpha,\beta})_j = \sum_{j=1}^{m} \Big[ \lambda_j (W_j W_j^\top) + \gamma_\alpha \, diag(\tilde{R}_j) \Big]$ be a matrix and $I$ be the identity matrix. Let $\sigma_{\max}$ be the largest singular value of $(A_{\alpha,\beta})_j$. Let $k$ be the condition number of $A_{\alpha,\beta}$. Suppose $\eta_k = \frac{\eta}{\sigma_{\max}}, \eta \in (0,2)$. Thus, the equation can be written as

\[
\|x^{k+1} - x_{\alpha,\beta}^{*} \|^2 \leq \|x^{0} - x_{\alpha,\beta}^{*} \|^2 \Big(1 + \frac{\eta}{k_{\alpha,\beta}}\Big)^k.
\]

\begin{align*}
&\text{Now},~x_{\alpha,\beta}^{(k)} = \Big( \sum_{j=1}^{m} \lambda_j (W_j W_j^\top)(I + k_j \gamma_{\alpha,\beta}) \Big)^{-1} \Big( \sum_{j=1}^{m} \lambda_j (W_j W_j^\top)(x^* + \gamma_{\alpha,\beta} k_j c) \Big)\\
&= \Big( \sum_{j=1}^{m} \lambda_j (W_j W_j^\top)(I + k_j \gamma_{\alpha,\beta}) \Big)^{-1} \Big( \sum_{j=1}^{m} \lambda_j (W_j W_j^\top)\Big[(I + \gamma_{\alpha,\beta} k_j) c +(x^* - c)\Big] \Big)\\
&= \Big( \sum_{j=1}^{m} \lambda_j (W_j W_j^\top)(I + k_j \gamma_{\alpha,\beta}) \Big)^{-1}\\
&
\Big( \sum_{j=1}^{m} \lambda_j (W_j W_j^\top)(I + \gamma_{\alpha,\beta} k_j) c + \sum_{j=1}^{m} \lambda_j (W_j W_j^\top)(x^* - c) \Big)\\
&= c + \Big( \sum_{j=1}^{m} \lambda_j \Big[W_j W_j^\top +  diag(\tilde{R}_j) \gamma_{\alpha,\beta}\Big] \Big)^{-1} \Big(\sum_{j=1}^{m} \lambda_j (W_j W_j^\top)(x^* - c)\Big)
= x^{k}_{\text{Tik}}.
\end{align*} 
\end{proof}
Remarks: A higher value of ${k_{\alpha,\beta}}$ denotes slow convergence. 

\begin{theorem}\label{thm6}
Let $\{\gamma_s\}_{s \geq 1}$ be a non-negative convergent sequence converges to $0$. Let $\{k_s\}_{s \geq 1}$ be a sequence of positive integers. It is assumed that $k$ is  condition number of $(A_{\alpha,\beta})_j$. Let $\eta_{s,k}= \eta_s$ be the step-length of the $k$-th iteration at the $s$-th stage for some $\eta_0 > 0$. Let
\(
R_s = \Big| 1 + \frac{\eta_s}{k_s} \Big|^\frac{k_s}{2} \text{ for all } s.
\) Then, solution of the adaptive order Caputo gradient fractional gradient descent of $(MOP)$ after $s$ stages satisfies
\begin{align*}
\Big\| x_s^{k} - x^* \Big\|
&\leq \prod_{l=1}^{s-1} R_{s-l} \Big\| x^{0} - x^*_{\text{Tik}}(\gamma_1) \Big\| \\
&\quad + C \Big\{ \sum_{i=1}^{s-1} \Big( \prod_{l=0}^{i-1} R_{s-l} \Big) \Big\| \gamma_{s-l} - \gamma_{s-i+1} \Big\| + \|\gamma_s\| \Big\}
\end{align*}

where $x^*_{\text{Tik}}$ is the solution of Tikhonov regularization solution, and $C$ is a constant which depends only on $W_j$, $R_s$, $x^*$, $c$, and the range of the $\{\gamma_s\}$ for $j\in\{1,2,\dots,m\}.$

\end{theorem}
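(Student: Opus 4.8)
The plan is to view the $s$-stage adaptive scheme as a concatenation of $s$ runs of the non-adaptive method of Theorem~\ref{thm5}, one per stage, and to telescope the per-stage linear contraction against the drift of the Tikhonov target caused by changing the regularization parameter from $\gamma_i$ to $\gamma_{i+1}$ between consecutive stages. Write $z_s^{*}:=x^{*}_{\mathrm{Tik}}(\gamma_s)$ for the Tikhonov-regularized point attached to stage $s$ (in the sense of Theorem~\ref{thm5}), and let $x_s^{k}$ denote the output of stage $s$, with the convention $x_0^{k}:=x^0$. Since stage $s$ is exactly the iteration analysed in Theorem~\ref{thm5} with the fixed data $\alpha_s,\beta_s,c$, the step-size prescription of that theorem, and regularization $\gamma_s$, started at the previous output $x_{s-1}^{k}$, taking square roots in the estimate of Theorem~\ref{thm5} gives the one-stage bound
\[
\big\| x_s^{k} - z_s^{*} \big\| \;\le\; R_s\,\big\| x_{s-1}^{k} - z_s^{*} \big\|, \qquad R_s=\Big|1+\tfrac{\eta_s}{k_s}\Big|^{k_s/2}.
\]

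The second ingredient is a Lipschitz estimate for the solution map $\gamma\mapsto x^{*}_{\mathrm{Tik}}(\gamma)$. From the closed form obtained at the end of the proof of Theorem~\ref{thm5}, $x^{*}_{\mathrm{Tik}}(\gamma)=c+M(\gamma)^{-1}v$, where $M(\gamma)=\sum_{j=1}^m\lambda_j\big(W_jW_j^\top+\gamma\,\mathrm{diag}(\tilde R_j)\big)$ is affine in $\gamma$ and $v=\sum_{j=1}^m\lambda_j(W_jW_j^\top)(x^{*}-c)$ does not depend on $\gamma$. Because each $W_jW_j^\top$ is positive definite and the multipliers lie in the unit simplex, $M(\gamma)$ stays uniformly positive definite for $\gamma$ in the bounded range of $\{\gamma_s\}$, so $\|M(\gamma)^{-1}\|$ is uniformly bounded there; the resolvent identity $M(\gamma)^{-1}-M(\gamma')^{-1}=M(\gamma)^{-1}\big(M(\gamma')-M(\gamma)\big)M(\gamma')^{-1}$ together with $M(\gamma')-M(\gamma)=(\gamma'-\gamma)\sum_j\lambda_j\,\mathrm{diag}(\tilde R_j)$ then yields $\|x^{*}_{\mathrm{Tik}}(\gamma)-x^{*}_{\mathrm{Tik}}(\gamma')\|\le C\,\|\gamma-\gamma'\|$. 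Taking $\gamma'=0$ and using that $M(0)=\sum_j\lambda_jW_jW_j^\top$ gives $x^{*}_{\mathrm{Tik}}(0)=c+(x^{*}-c)=x^{*}$, we obtain in particular $\|z_s^{*}-x^{*}\|\le C\|\gamma_s\|$.

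Now combine the two. Inserting $z_{s-1}^{*}$ by the triangle inequality in the one-stage bound,
\[
\big\|x_s^{k}-z_s^{*}\big\|\le R_s\big\|x_{s-1}^{k}-z_{s-1}^{*}\big\|+R_s\big\|z_{s-1}^{*}-z_s^{*}\big\|\le R_s\,a_{s-1}+C\,R_s\|\gamma_{s-1}-\gamma_s\|,
\]
where $a_s:=\|x_s^{k}-z_s^{*}\|$ and $a_1\le R_1\|x^0-z_1^{*}\|$. This is a scalar linear recursion; unrolling it produces
\[
a_s\le\Big(\prod_{j=1}^{s}R_j\Big)\big\|x^0-x^{*}_{\mathrm{Tik}}(\gamma_1)\big\|+C\sum_{i=1}^{s-1}\Big(\prod_{j=i+1}^{s}R_j\Big)\|\gamma_i-\gamma_{i+1}\|,
\]
and one last triangle inequality $\|x_s^{k}-x^{*}\|\le a_s+\|z_s^{*}-x^{*}\|\le a_s+C\|\gamma_s\|$ gives the stated estimate, after relabeling the products via $l=s-j$ and collecting the finitely many, uniformly bounded geometric prefactors into the single constant $C$ depending only on $W_j$, the $R_s$, $x^{*}$, $c$ and the range of $\{\gamma_s\}$.

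The main obstacle is the Lipschitz/drift step: one must show that $M(\gamma)$ is \emph{uniformly} invertible along the entire stage schedule — not merely for each fixed $\gamma$ — so that the constant $C$ is genuinely independent of $s$; here the boundedness of the simplex-valued multipliers $\lambda_j$, the boundedness of the range of $\{\gamma_s\}$, and positive definiteness of $\sum_j\lambda_jW_jW_j^\top$ are all used, and one should also be careful about the multipliers active at the limit point possibly differing across stages (they always lie in the simplex, which is all the bound needs). Once this is in hand, the remainder is bookkeeping: taking square roots in Theorem~\ref{thm5}, two applications of the triangle inequality, and unrolling a one-term linear recursion.
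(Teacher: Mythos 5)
Your proposal is correct and follows essentially the same route as the paper's own proof: a per-stage contraction bound imported from Theorem~\ref{thm5}, a Lipschitz estimate for the map $\gamma \mapsto x^*_{\mathrm{Tik}}(\gamma)$ obtained from a resolvent-type identity for $\big(\sum_j W_jW_j^\top + \gamma\,\cdot\,\big)^{-1}$, and an unrolled linear recursion combined with triangle inequalities. Your write-up is in fact somewhat more careful than the paper's (notably in making the uniform invertibility of $M(\gamma)$ over the range of $\{\gamma_s\}$ explicit, where the paper simply posits that $B_{\max}$ is finite), but the decomposition and key lemmas are identical.
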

\begin{proof}
Let $ x_{\gamma}^* $ be the solution to the Tikhonov regularization problem, where
$
\gamma = \beta - \frac{1 - \alpha}{2 - \alpha}.
$ Here, $ \gamma_s = \beta_s - \frac{1 - \alpha_s}{2 - \alpha_s} $, and $ \gamma_s \in \Big[0, \beta_s - \frac{1}{2}\Big) $ for all $ s $. It follows that
$
\lim_{s \rightarrow0} \gamma_s = \lim_{s \rightarrow0} \Big( \beta_s - \frac{1 - \alpha_s}{2 - \alpha_s} \Big) = 0,
$
where $ \beta_s \in \Big[ \frac{1 - \alpha_s}{2 - \alpha_s}, \infty \Big) $ and $ \alpha_s \in (0,1] $, so that the stated condition holds. Let $ (A_{\alpha, \beta})_j $ be a matrix defined earlier, and let $ k_{\alpha, \beta} $ denote the associated condition number. For each stage $ s = 1, 2, \dots $, we have
\vspace{-1 mm}
\begin{align*}
&\Big\| x_s^{k_s} - x^*_{\gamma_s} \Big\|^2 
\leq r_s \Big\| x^{0} - x^*_{\gamma_s} \Big\|^2, 
\quad \text{where} \quad
\Big(1 + \frac{\eta}{k_{\alpha_s,\beta_s}} \Big)^{k_s} = r_s^{(k_s)}\\
&\text{that implies}~~\Big\| x_s^{k_s} - x^*_{\gamma_s} \Big\| 
\leq r_s^{\frac{k_s}{2}} \Big\| x^{0} - x^*_{\gamma_s} \Big\|.
\end{align*}

Define, $ \epsilon_s = \Big\| x^{0} - x^*_{\gamma_s} \Big\| $, $ e_s = \Big\| x^*_{\gamma_s} - x^*_{\gamma_{s+1}} \Big\| $, and $ R_s = r_s^{\frac{k_s}{2}} $. Note that $ x_s^{(k_s - 1)} = x_s^{0} $ for all $ s $. Therefore, we have,
$\epsilon_s \leq R_{s-1} \epsilon_{s-1} + e_{s-1}.$
At stage $ s - 1 $, the following inequality holds,
\begin{align*}
&\Big\| x_s^{(k_{s-1})} - x^*_{\gamma_s} \Big\| \leq R_{s-1} \Big\| x_{s-1}^{0} - x^*_{\gamma_{s-1}} \Big\|\\
&\text{implies}~\Big\| x_s^{0} - x^*_{\gamma_s} \Big\| = \epsilon_s \leq R_{s-1} \epsilon_{s-1} + e_{s-1},
\end{align*}
where the error bound $e_s $ is given by $ \Big\| x^*_{\gamma_s} - x^*_{\gamma_{s+1}} \Big\|.$
Applying this relationship recursively, we obtain,
$
R_s \epsilon_s \leq \sum_{k=1}^{s} \Big( \prod_{l=0}^{k-1} R_{s-l} \Big) e_{s-k},$ where $e_0 = \epsilon_1.
$ Thus, the final error bound becomes,
$$
\Big\| x_s^{k_s} - x_0^* \Big\| \leq \Big( \sum_{k=1}^{s} \Big( \prod_{l=0}^{k-1} R_{s-l} \Big) e_{s-k} \Big) \Big\| x_{\gamma_s}^* - x^* \Big\|.$$\\
For any $ \gamma, \gamma_j \in \Big[0, \beta - \tfrac{1}{2} \Big] $, we have
\begin{align*}
x^*_{\gamma'} - x^*_{\gamma} 
&= \bar{x} + \Big( \sum_{j=1}^{m} W_j W_j^\top + \gamma_j R_j R_j^\top \Big)^{-1} 
\Big( \sum_{j=1}^{m} W_j y_j - W_j W_j^\top \bar{x} \Big) - \bar{x} \\
&\quad - \Big( \sum_{j=1}^{m} W_j W_j^\top + \gamma'_j R_j R_j^\top \Big)^{-1} 
\Big( \sum_{j=1}^{m} W_j y_j - W_j W_j^\top \bar{x} \Big) \\
&= \Big[ \Big( \sum_{j=1}^{m} W_j W_j^\top + \gamma_j R_j R_j^\top \Big)^{-1} 
- \Big( \sum_{j=1}^{m} W_j W_j^\top + \gamma'_j R_j R_j^\top \Big)^{-1} \Big] \\
&\quad \cdot \Big( \sum_{j=1}^{m} W_j y_j - W_j W_j^\top \bar{x} \Big) \\
&= \Big( \sum_{j=1}^m W_j W_j^\top + \gamma_j R_j R_j^\top \Big)^{-1} \\
&\Big[ I - \Big( \sum_{j=1}^{m} W_j W_j^\top + \gamma'_j R_j R_j^\top \Big)^{-1} 
\Big( \sum_{j=1}^{m} W_j W_j^\top + \gamma_j R_j R_j^\top \Big) \Big]~ \\
&\cdot \Big( \sum_{j=1}^{m} W_j y_j - W_j W_j^\top \bar{x} \Big) \\
&= \Big( \sum_{j=1}^{m} W_j W_j^\top + \gamma_j R_j R_j^\top \Big)^{-1} 
\Big[ I - \frac{ \sum_{j=1}^m W_j W_j^\top + \gamma'_j R_j R_j^\top }
{ \sum_{j=1}^m W_j W_j^\top + \gamma_j R_j R_j^\top } \Big] \\
&\quad \cdot \Big( \sum_{j=1}^{m} W_j y_j - W_j W_j^\top \bar{x} \Big) \\
&= (\gamma_j - \gamma'_j) \sum_{j=1}^{m} R_j R_j^\top 
\Big( \sum_{j=1}^{m} W_j W_j^\top + \gamma R_j R_j^\top \Big)^{-1} 
\Big( \sum_{j=1}^{m} W_j y_j - W_j W_j^\top \bar{x} \Big).
\end{align*}
Assume, $ B_{\max} = \sup\limits_{\gamma_j \in \Big[0, \beta - \tfrac{1}{2} \Big]} 
\Big\| \Big( \sum_{j=1}^{m} W_j W_j^\top + \gamma R_j R_j^\top \Big)^{-1} \Big\| $ is finite, and let 
\[
C = B^2 \Big\| \sum W_j W_j^\top \Big\| \Big\| \sum R_j \Big\|^2 \Big\| x^* - c \Big\|.
\]

Then,
\[
\Big\| x^*_{\gamma_s} - x_0^* \Big\| \leq C |\gamma_s|, \quad 
e_s = \Big\| x^*_{\gamma_s} - x^*_{\gamma_{s+1}} \Big\| \leq C \Big| \gamma_s - \gamma_{s+1} \Big|, \quad \forall s \geq 1.
\]

\begin{align*}
\text{Now,}~\Big\| x_s^{k_s} - x^* \Big\| 
&= \Big\| x_s^{k_s} - x^*_0 \Big\| \quad \text{(as } x_0^* = x^* \text{)} \\
&\leq \sum_{k=1}^{s} \Big( \prod_{l=0}^{k-1} R_{s-l} \Big) e_{s-k} + \Big\| x^*_{\gamma_s} - x_0^* \Big\| \\
&\leq \sum_{k=1}^{s-1} \Big( \prod_{l=0}^{k-1} R_{s-1} \Big) e_{s-k} + \Big\| x^*_{\gamma_s} - x_0^* \Big\| + e_0 \Big( \prod_{l=0}^{k-1} R_{s-l} \Big) \\
&\leq \Big( \prod_{l=0}^{s-1} R_{s-l} \Big) \Big\| x^{0} - x_{\gamma_1}^* \Big\| + 
C \Big\{ \Big( \sum_{k=1}^{s-1} \prod_{l=0}^{k-1} R_{s-l} \Big) \Big| \gamma_{s-k+1} - \gamma_{s-k} \Big| + |\gamma_s| \Big\}.
\end{align*}

Let, $ k_s = k $, and suppose $ R_s < \rho^k $, where $ \rho \in (0,1) $. Then we have
\[
\lim_{k \to \infty} \Big\| x_s^{k} - x^* \Big\| \leq C |\gamma_s|, \quad 
\text{and} \quad 
\lim_{s \to \infty} \Big\| x_s^{k} - x^* \Big\| < \frac{C \rho^k}{1 - \rho^k}.
\]

Therefore, 
\(
\lim_{k,s \to \infty} \Big\| x_s^{k} - x^* \Big\| = 0.
\), this completes the proof.
\end{proof}

\section{Numerical example}\label{sec5}
In this section, we present a series of numerical experiments that both validate our theoretical results and evaluate the efficacy of the proposed the Caputo fractional gradient descent algorithm in multi‐objective optimization tasks. In the first experiment, we benchmark its convergence behavior against the classical gradient descent method(GD). The second example demonstrates its applicability and performance in training a neural network model under a multi‐objective loss formulation.  A numerical example helps to understand this methodology better. A Python-based code is developed for the proposed method. The subproblem is solved using Python solver {\it cvxopt}. The stopping criteria of the method is $\|d^k\|<10^{-4}$ or maximum $2000$ iterations. To generate approximate Pareto fronts, a set of $100$ uniformly distributed random points between initial points $lb$ and final points $ub$ is considered and the proposed algorithm is executed individually. 
\begin{e1}
{\it Quadratic multi-objective optimization problem with random data:} \emph{Consider the optimization problem,
\begin{equation}\label{eq:quadratic_mop}
\min_{x \in \mathbb{R}^n} 
\Bigl(\tfrac{1}{2} x^\top A_1 x + b_1^\top x,\;
      \tfrac{1}{2} x^\top A_2 x + b_2^\top x\Bigr).
\end{equation}
 where $A_j= W_j W_j^\top$, $b_j = -A_jy_j$ for $j=1,2$ and  \( W_j \in \mathbb{R}^{m \times n} \) is generated by sampling each entry independently from the uniform distribution over $(-1,1)$.}
 \end{e1}
Fix the Tikhonov regularization parameters 
\( \gamma \in \{0.15, 0.25, 0.5, 0.75, 1, 10\} \), 
and use a memory length \( L = 1 \), with a terminal vector \( c = 0 \). As initial guesses, we generate a grid of \(100\) points by uniformly subdividing the segment from \((1.01, \dots, 1.01)\) to \((10, \dots, 10)\). All experiments are conducted with \( m = n = 100 \). The $\ell_2$ distance to the regularized solution \( x_{\mathrm{Tik}}(\gamma) \) is reported with respect to the number of iterations for each value of \( \gamma \in \{0.15, 0.25, 0.5, 0.75, 1, 10\} \).

Table~\ref{tab:gd_vs_moaocfgd} presents a comparative analysis of classical gradient descent and the proposed MOAOCFGD method for various regularization values. Two key metrics are highlighted: the condition number and CPU time. A high condition number typically indicates slower convergence, while longer CPU times reinforce this observation. At \( \gamma = 0.15 \), GD yields a high condition number of approximately \( 5.14 \times 10^1 \) and requires \( 0.589 \) seconds(s) of CPU time. In contrast, MOAOCFGD significantly reduces the condition number to \( 2.59 \) and completes in just \( 0.10 \) seconds. As the value of \( \gamma \) increases, both methods demonstrate improved numerical conditioning. However, MOAOCFGD consistently achieves better performance in all values. For example, at \( \gamma = 1.00 \), the condition number for GD drops to \( 3.95 \) with a CPU time of \( 0.104 \) seconds, while MOAOCFGD achieves a lower condition number of \( 1.90 \) in only \( 0.07 \) seconds. Even at the high regularization value \( \gamma = 10.0 \), GD exhibits a condition number of \( 18.8 \) with \( 0.148 \) seconds of CPU time. MOAOCFGD produces a comparable condition number of \( 19.9 \), but in a reduced time of \( 0.06 \) seconds. These results clearly indicate that MOAOCFGD consistently yields smaller condition numbers and faster runtimes, demonstrating a more efficient convergence behavior than standard GD. Furthermore, Figure~\ref{fig:comparison-pareto1}, \ref{fig:comparison-pareto2} and \ref{fig:comparison-pareto3}, Pareto fronts are plotted corresponding to \( \gamma = \{0.15, 0.25, 0.50, 0.75, 1 ,10\} \), which illustrates the trade-offs and the optimization landscape more effectively.

\begin{table}
    \centering
    \begin{tabular}{%
        >{\centering\arraybackslash}p{1.2cm}
        >{\centering\arraybackslash}p{1.5cm}
        >{\centering\arraybackslash}p{1.5cm}
        >{\centering\arraybackslash}p{1.5cm}
        >{\centering\arraybackslash}p{1.5cm}
    }
        \toprule
        & \multicolumn{2}{c}{Gradient Descent (GD)}
        & \multicolumn{2}{c}{MOAOCFGD} \\
        \cmidrule(lr){2-3}\cmidrule(lr){4-5}
        Value of $\gamma$
        & Condition no.\ & CPU time (s)
        & Condition no.\ & CPU time (s) \\
        \midrule
        0.15 & 5.14e+01 & 0.589 & 2.59e+00 & 0.10 \\
        0.25 & 6.93e+00 & 0.400 & 3.22e+00 & 0.08 \\
        0.50 & 9.33e+00 & 0.218 & 6.68e+00 & 0.13 \\
        0.75 & 5.92e+00 & 0.078 & 5.72e+00 & 0.12 \\
        1.00 & 3.95e+00 & 0.104 & 1.90e+00 & 0.07 \\
        10.0 & 1.88e+01 & 0.148 & 1.99e+01 & 0.06 \\
        \bottomrule
    \end{tabular}
    \caption{Comparison between MOGD and MOAOCFGD across Tikhonov parameter \(\gamma\).}
    \label{tab:gd_vs_moaocfgd}
\end{table}

\begin{figure}
  \centering
  \begin{subfigure}[b]{0.45\linewidth}
    \centering
    \includegraphics[width=\linewidth,height=2.75cm]{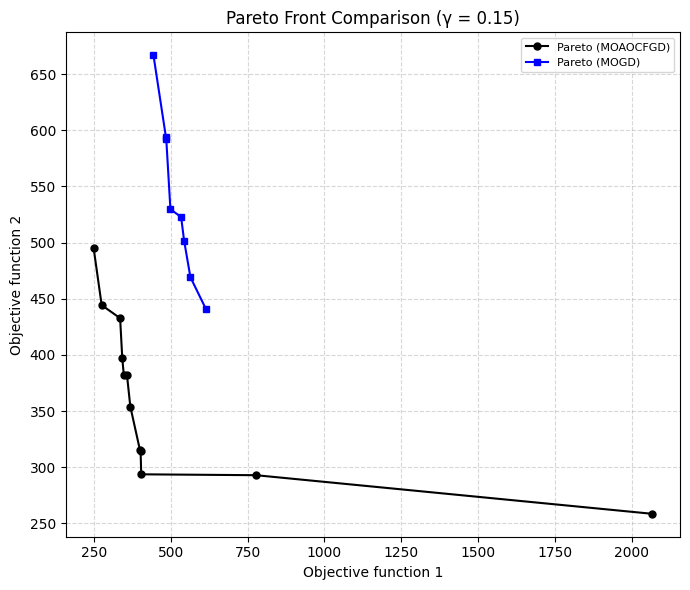}
   
  \end{subfigure}
  \hfill
  \begin{subfigure}[b]{.45\linewidth}
    \centering
    \includegraphics[width=\linewidth,height=2.75cm]{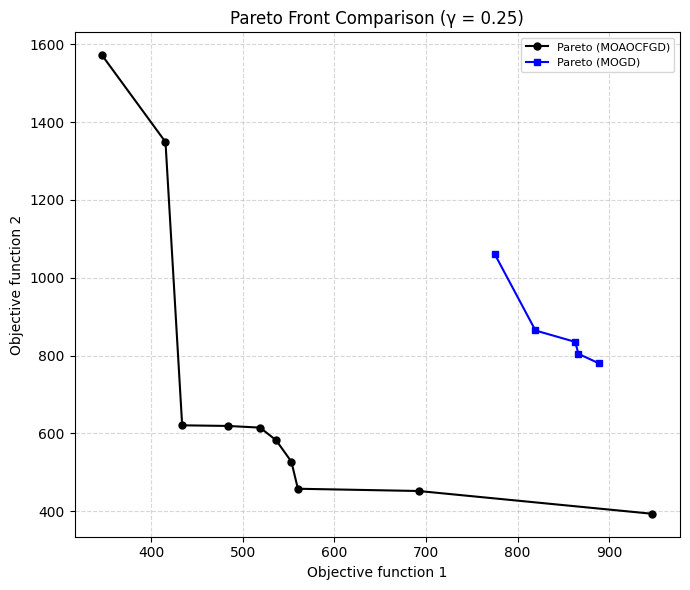}
    \end{subfigure}
  \caption{MOAOCFGD versus MOGD at $\gamma$ values of $0.15$ and $0.25$.}
  \label{fig:comparison-pareto3}
  
\end{figure}
\begin{figure}
  \begin{subfigure}[b]{0.45\linewidth}
    \centering
    \includegraphics[width=\linewidth,height=2.75cm]{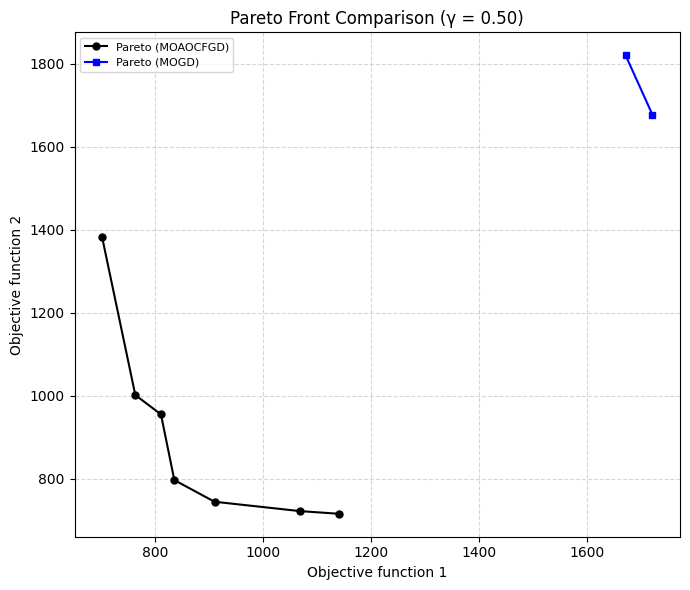}
    
  \end{subfigure}
  \hfill
  \begin{subfigure}[b]{0.45\linewidth}
    \centering
    \includegraphics[width=\linewidth,height=2.75cm]{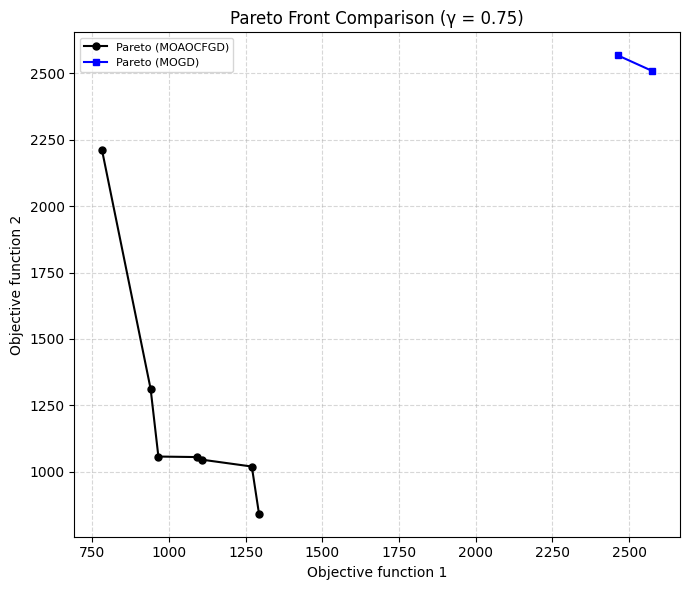}
    \end{subfigure}
  \caption{MOAOCFGD versus MOGD at $\gamma$ values of $0.5$ and $0.75$.}
  \label{fig:comparison-pareto2}

  \end{figure}
  \begin{figure}
  \begin{subfigure}[b]{0.45\linewidth}
    \centering
    \includegraphics[width=\linewidth,height=2.75cm]{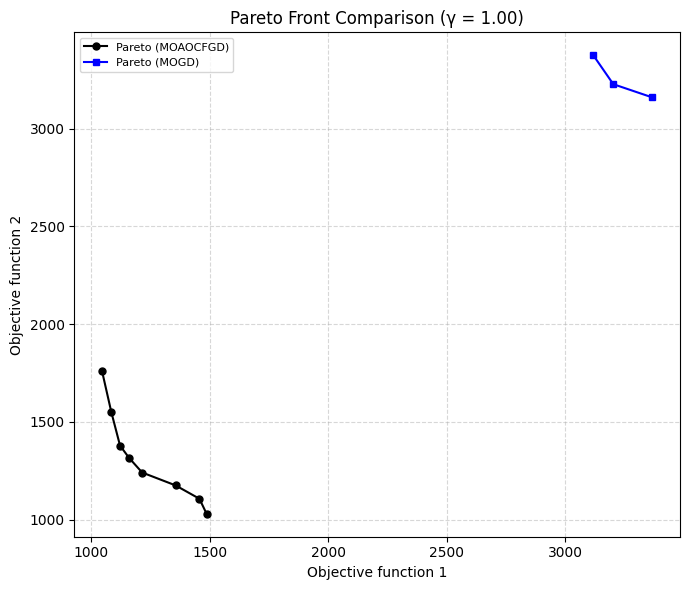}
    
  \end{subfigure}
  \hfill
  \begin{subfigure}[b]{0.45\linewidth}
    \centering
    \includegraphics[width=\linewidth,height=2.75cm]{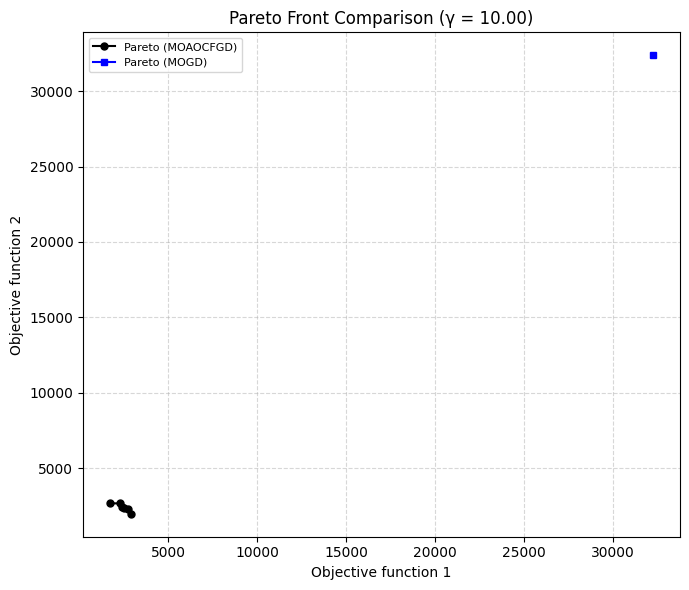}
   \end{subfigure}
  \caption{MOAOCFGD versus MOGD at $\gamma$ values of $1$ and $10$.}
  \label{fig:comparison-pareto1}
\end{figure}

\begin{e1}{Implementation in neural networks:} \emph{This exapmle illustrates the methodology in the application field of numerical examples. The methodology is also applicable to non-smooth functions. A numerical example helps to understand this methodology better. A Python-based code is developed for the proposed methodology. For this following steps are follows.}
\end{e1}
\begin{enumerate}
    \item The design search exploration(DSE) algorithm was evaluated on ANN applications focusing on the standard MNIST and CIFAR-10 image recognition datasets, where all ANN models were rigorously trained and tested.

    \item The activation function used is ReLU (Rectified Linear Unit),\\ mathematically defined as 
    \[
    \text{ReLU}(x) = \max(0, x).
    \]
    \item The output layers are comprised of softmax units, with the softmax function formulated as 
    \[
    \text {softmax}(z_i) = \frac{e^{z_i}}{\sum_{j=1}^{K} e^{z_j}},
    \]
    where $z_i$ denotes the score of class $i$ and $K$ is the total number of classes.
    \item Validation of the proposed algorithm was undertaken on two distinct image recognition databases: The MNIST database, containing $28 \times 28$ pixel grayscale images, and the CIFAR-10 dataset, which includes $32 \times 32$ pixel images in RGB colour. The learning rates were set between 0.1 and 0.8, with the training duration extending from 100 to 200 epochs.
    \item $||d^k||< 10 ^{-4}$ or maximum $200$ iteration is considered as stopping criteria. To find out execution time stopping criteria is not use that time.
\end{enumerate}
This structured approach facilitates a comprehensive evaluation of the algorithm's performance across diverse settings. The network settings used in the experiment are summarized below. Certain configurations were modified to explore the effects of varying network designs.
However, kept some settings the same for all experiments
\begin{enumerate}
    \item \text{Output Layer:} A softmax layer is used to compute the probability distribution over all classes.
    \item \text{Loss Function:} Categorical cross-entropy is employed to evaluate the accuracy of the predictions.
    \item \text{Batch Normalization:} Batch normalization is applied to stabilize and enhance the training process, with the exception of the reduced MNIST experiments, where it is omitted.
\end{enumerate}
Table~\ref{Table 1} presents the specific settings varied in experiments. Reference~\cite{18} introduces the DSE method, which employs response surface modelling (RSM) techniques to predict classification accuracy and to automate the design of artificial neural networks(ANN) hyperparameters. This method has been validated with multi-layer perception (MLP) and convolutional neural network (CNN) architectures targeting the CIFAR-10 and MNIST image recognition datasets~\cite{33,34}. Furthermore, the study in~\cite{18} develops a multi-objective hyper-parameter optimization approach that can effectively reduce the implementation costs of ANN.

\par To evaluate the effectiveness of the method in identifying optimal solutions, a comparison is conducted with an exhaustive search approach. The comparison involved designing MLP models for the MNIST dataset, limiting the search space to parameters listed in the first section of Table \ref{Table 1}, which contains $10,000$ potential solutions. Each solution was trained and evaluated. The performance of the MOAOCFGD algorithm over 200 iterations, where each iteration involves designing, training, evaluating, and updating the model, is depicted. The results demonstrate that the MOAOCFGD algorithm effectively represents the true Pareto optimal front, evaluating only a limited number of solutions. Notably, the outcomes from the MOAOCFGD algorithm generally surpassed those obtained from the exhaustive search in several cases. Most solutions assessed by the MOAOCFGD algorithm stayed near the true Pareto front, avoiding Pareto-dominated regions.

Table~\ref{Table 2} presents a comparative analysis between the proposed MOAOCFGD algorithm and the DSE approach detailed in \cite{18}. The results clearly indicate that MOAOCFGD outperforms the DSE method across all evaluated metrics. Specifically, MOAOCFGD achieves lower average distance from reference set(ADRS) (see\cite{18}) values and reduced iteration times, highlighting its superior accuracy and efficiency in approximating the Pareto front. These improvements are particularly notable during the initial 30 iterations, where the algorithm rapidly converges toward the true Pareto front, with subsequent iterations yielding more refined adjustments.

Figure~\ref{fig1} and \ref{fig2} illustrate the progression of the approximated Pareto optimal set generated by MOAOCFGD across different iteration counts. The first two images show the results for Restricted MNIST and MNIST (MLP) using both the DSE method and the proposed method. From the plots, it is evident that the proposed method demonstrates better convergence performance compared to the DSE approach. The second two images depict the Pareto fronts obtained using the DSE method for the MNIST (CNN) and CIFAR-10 (CNN) datasets, respectively. The subsequent two images illustrate the results produced by the proposed MOAOCFGD method. It is evident from the comparison that the proposed approach yields a more accurate  Pareto front, indicating improved performance over the baseline method. As the iterations increase, the algorithm solutions align closely with the true Pareto front, reflecting a steady improvement in approximation quality. The figure shows a clear reduction in the ADRS, with values dropping from $19.74\%$ at iteration $20$ to $4.78\%$ at iteration $100$, and reaching as low as $2.45\%$ by iteration $200$. This information is shown in Table \ref{Table 1} and Table \ref{Table 2}.  
\begin{table}[!http]
\begin{center}
\begin{tabular}{@{}cccccc@{}}
\toprule
\text{Target} & \text{Parameter} & \text{Range} & \text{Steps} & \text{Scale}\\ \hline
Restricted MNIST (MLP) & Number of FC layers & 1 to 3 & 3 & Linear \\
 & Nodes per FC layer & 10 to 200 & 10 & Log \\
 & Learning rate & 0.01 to 0.8 & 9 & Log \\
 & Activation function & ReLU and softmax & 1 & - \\
 & Training algorithm & PW & 1 & - \\
 & Batch sizes & 300 & 1 & - \\
 & Training epochs & 10 & 1 & - \\
\\\hline
MNIST (MLP) & Number of FC layers & 1 to 3 & 3 & Linear \\
 & Nodes per FC layer & 10 to 500 & 85 & Log \\
 & Learning rate & 0.001 to 0.8 & 16 & Log \\
 & Activation function & ReLU or tanh & 2 & - \\
 & Training algorithm & PW & 1 & - \\
 & Batch sizes & 200 & 1 & - \\
 & Training epochs & 10 & 1 & - \\
\\ \hline
MNIST (CNN) & Number of CNN layers & 1 to 3 & 3 & Linear \\
 & Filters per CNN layer & 12 to 128 & 12 & Log \\
 & Learning rate & 0.012 to 0.8 & 12 & Log \\
 & Filter kernel size & 1x1 to 5x5 & 3 & Linear \\
 & Max-pool size & 2x2 to 4x4 & 3 & Linear \\
 & Activation function & ReLU and softmax & 1,1 & - \\
 & Training algorithm & 0.973 & 1 & - \\
 & Batch sizes & 200 & 1 & - \\
 & Training epochs & 5 & 1 & - \\
\\ \hline
CIFAR-10 (CNN) & Number of CNN layers & 1 to 2 & 3 & Linear \\
 & Filters per CNN layer & 16 to 512 & 20 & Log\\
 & Learning rate & 0.012 to 0.8 & 1 & - \\
 & Filter kernel size & 3x3 to 9x9 & 4 & Linear \\
 & Max-pool size & 2x2 to 3x3 & 2 & Linear \\
 & Activation function & ReLU & 1 & - \\
 & Training algorithm & Adam  & 1 & - \\
 & Batch sizes & 3$\times$ 3 & 1 & - \\
 & Training epochs & 5 & 1 & - \\
\\ \hline

\end{tabular}
\end{center}
\caption{Experimental design-space parameters}
\label{Table 1}

\end{table}
\noindent

\begin{table}[!http]
\noindent
\begin{center}
\begin{tabular}{@{}cccccc@{}}
\hline
\multicolumn{1}{|c|} {\text{Type}}& \multicolumn{2}{|c|}{\text{DSE}} & \multicolumn{2}{|c|}{\text{MOAOCFGD}} \\
\hline
\text{Iterations} & \text{ADRS value} & \text{Exec. time(sec.)} & \text{ADRS value} & \text{Exec. time (sec.)} \\
\hline

\multicolumn{5}{|c|}{\text{Function Type: Restricted MNIST(MLP)}} \\
\hline
10  & 30\%  & 120   & 21.55\%  & 34.04 \\
20  & 21\%  & 241   & 19.74\%  & 82.29 \\
30  & 8.4\% & 364   & 7.32\%   & 115.31 \\
50  & 7.1\% & 672   & 7.29\%   & 178.32 \\
70  & 6.1\% & 984   & 4.95\%   & 245.22 \\
100 & 5.0\% & 1524  & 4.78\%   & 339.12 \\
150 & 4.5\% & 2712  & 2.51\%   & 513.04 \\
200 & 3.6\% & 4206  & 2.45\%   & 591.27 \\
\hline

\multicolumn{5}{|c|}{\text{Function Type: MNIST(MLP)}} \\
\hline
10  & 28\%  & 132   & 20.96\%  & 33.93 \\
20  & 20\%  & 270   & 22.45\%  & 75.70 \\
30  & 7.5\% & 408   & 6.75\%   & 109.74 \\
50  & 6.8\% & 720   & 7.02\%   & 174.91 \\
70  & 5.8\% & 1050  & 4.64\%   & 240.74 \\
100 & 4.9\% & 1620  & 4.49\%   & 322.46 \\
150 & 5.1\% & 2316  & 4.49\%   & 435.53 \\
200 & 4.8\% & 2538  & 4.64\%   & 584.13 \\
\hline

\multicolumn{5}{|c|}{\text{Function Type: MNIST(CNN)}} \\
\hline
10  & 32\%  & 180   & 27.39\%  & 53.92 \\
20  & 22\%  & 360   & 21.37\%  & 114.73 \\
30  & 9\%   & 540   & 10.93\%  & 170.73 \\
50  & 8\%   & 900   & 8.50\%   & 272.69 \\
70  & 7\%   & 1260  & 4.81\%   & 382.76 \\
100 & 6.0\% & 1800  & 4.61\%   & 544.51 \\
150 & 6.5\% & 2406  & 4.70\%   & 928.81 \\
200 & 7.5\% & 3132  & 4.67\%   & 1230.19 \\
\hline

\multicolumn{5}{|c|}{\text{Function Type: CIFAR-10(CNN)}} \\
\hline
10  & 35\%  & 360   & 26.19\%  & 60.11 \\
20  & 25\%  & 720   & 24.86\%  & 124.78 \\
30  & 12\%  & 1080  & 9.21\%   & 189.86 \\
50  & 10\%  & 1800  & 8.80\%   & 305.15 \\
70  & 9\%   & 2520  & 8.80\%   & 408.00 \\
100 & 8\%   & 3600  & 4.46\%   & 576.88 \\
150 & 5.4\% & 4530  & 3.12\%   & 987.55 \\
200 & 3.7\% & 5544  & 3.08\%   & 1265.01 \\
\hline

\end{tabular}
\end{center}

\caption{Comparison of DSE and MOAOCFGD for different function types.}
\label{Table 2}
\end{table}
\begin{figure}[!htbp]
  \centering
  \begin{subfigure}[b]{0.45\textwidth}
   \includegraphics[width=\linewidth,height=2.75cm]{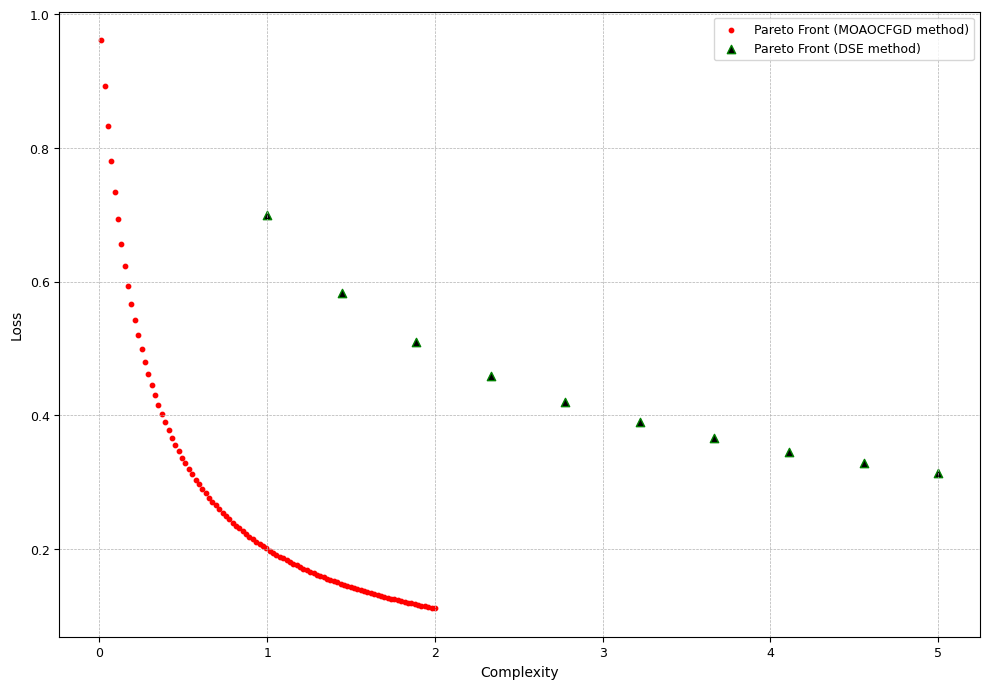}
    \label{fig31}
  \end{subfigure}
  \hfill
    \begin{subfigure}[b]{0.45\textwidth}
    \includegraphics[width=\linewidth,height=2.75cm]{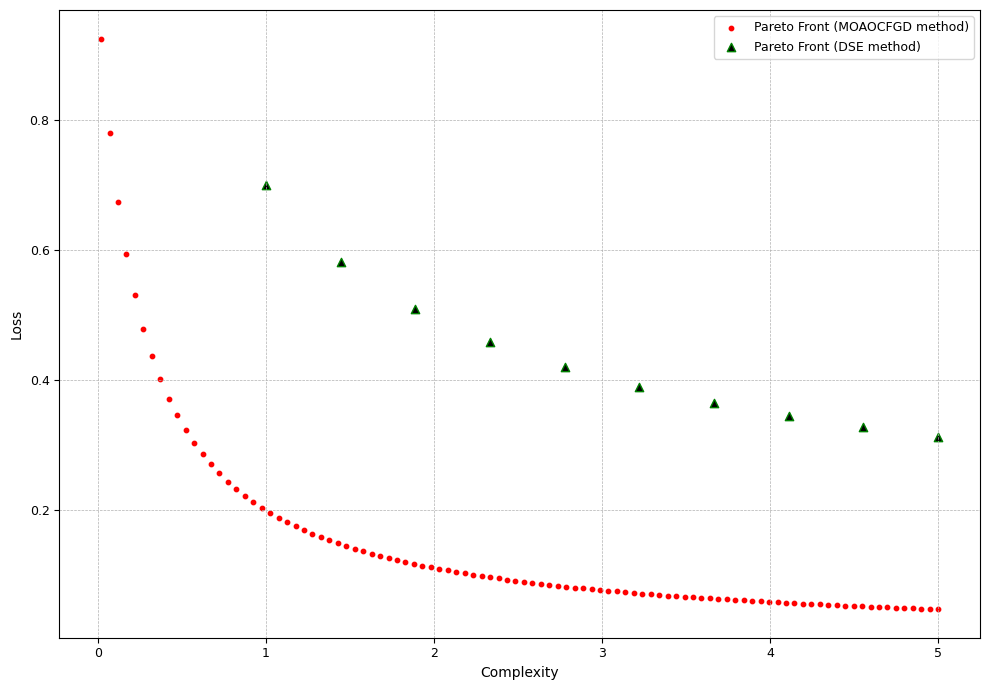}
    \label{fig32}
  \end{subfigure}
    \caption{Restricted MNIST and MNIST datasets of neural networks based on two objectives.}
  \label{fig1}

\end{figure}
\begin{figure}[!htbp]
  \centering
  \begin{subfigure}[b]{0.45\textwidth}
   \includegraphics[width=\linewidth,height=2.75cm]{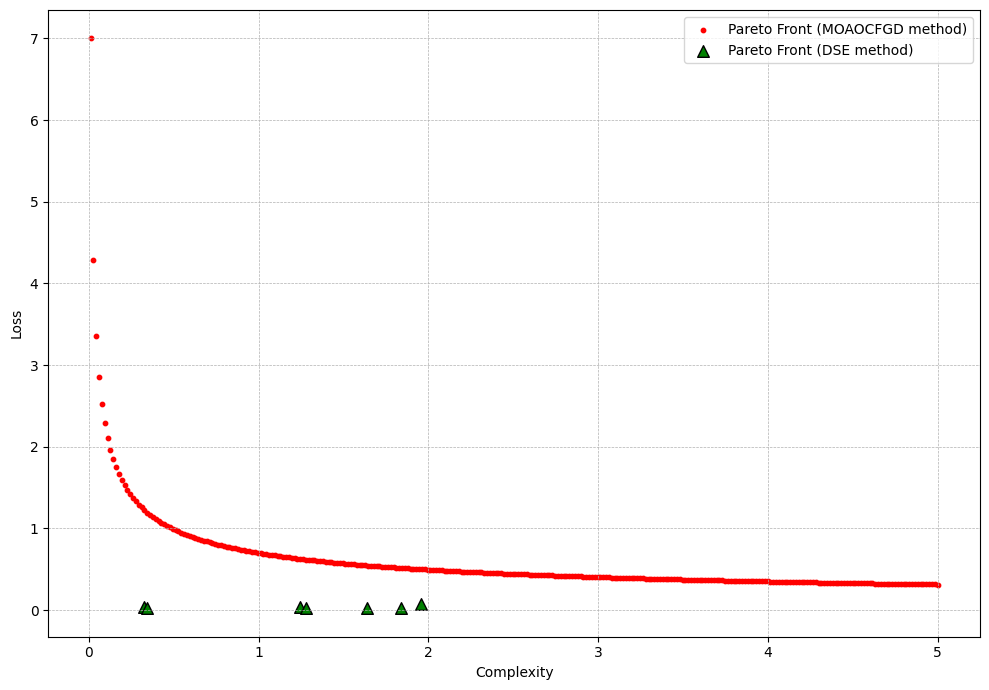}
    \label{fig41}
  \end{subfigure}
  \hfill
    \begin{subfigure}[b]{0.45\textwidth}
    \includegraphics[width=\linewidth,height=2.75cm]{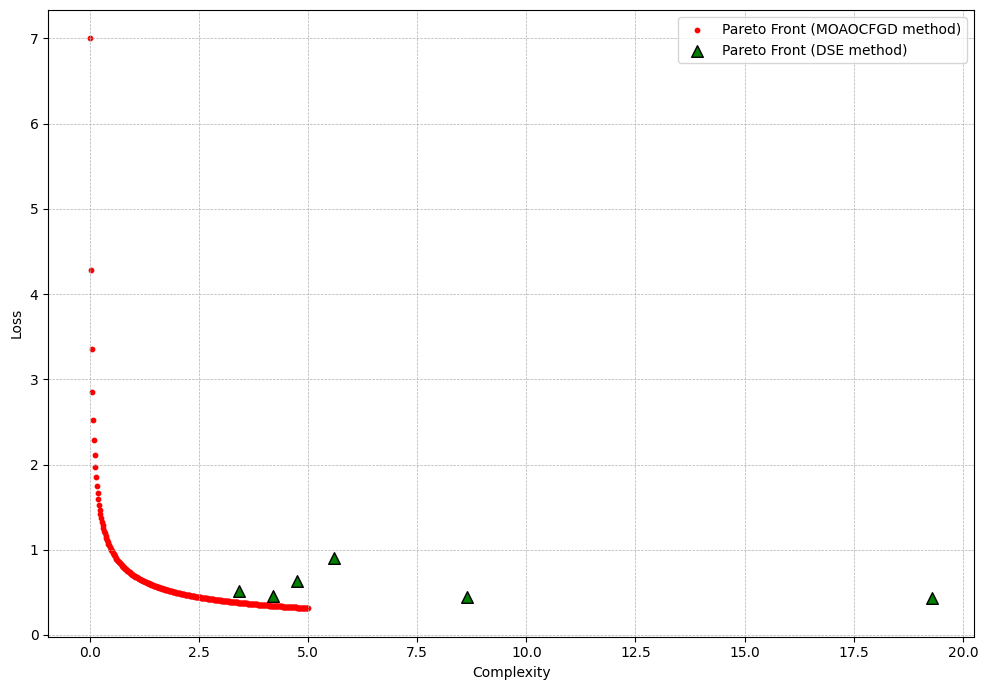}
    \label{fig42}
  \end{subfigure}
    \caption{MNIST(CNN) and CIFAR-10(CNN) datasets of neural networks based on two objectives.}
  \label{fig2}

\end{figure}

\noindent
\section{Conclusions}\label{sec6}
A new algorithm, MOAOCFGD, is presented for unconstrained $(MOP)$ with non-smooth functions. The Caputo fractional derivatives are used for their clear handling of constants. No weight functions or objective ordering are needed, unlike scalarization methods. A valid descent direction is found by forming a quadratic model of the objectives. An Armijo-type line search is then applied to ensure a sufficient decrease. This guarantees a true descent step and supports convergence in the optimization process. The convergence to a Tikhonov-regularized solution is shown via least squares theory. Error bounds ensure convergence to an integer-order critical point under mild conditions. The method effectively solves simple test problems. Better trade-offs were obtained compared to standard MOGD by using Tikhonov regularization. Numerical experiments on neural network training confirm the strength of the method. Tables and Pareto front plots highlight its superior performance. The fractional-derivative framework may be extended to other multi-objective methods, such as quasi-Newton or Newton-type approaches.  
\section*{Declarations}
\begin{itemize}
    \item {\bf Acknowledgement:} Author Barsha Shaw appreciates the financial support received from the University Grants Commission (UGC) Fellowship (ID No. 211610020050), Government of India, which supported her Ph.D. work.
    \item {\bf Ethics approval and consent to participate:} Not applicable.\\
\item {\bf Consent for publication:} The authors declare no consent for publication.\\
\item {\bf Data availability statement:} This paper does not involve any associated data.\\
\item {\bf Competing interest:} The author declares no conflict of interest.\\
\item {\bf Funding information:} No\\
\item {\bf Authors' contributions:} Both authors have equally contributed to the theoretical as well as numerical part.\\
\end{itemize}


\begin{thebibliography}{99} 
\bibliography{biblo}
\bibitem{A1}
C. S. Drapaca and S.~Sivaloganathan.
\newblock A fractional model of continuum mechanics.
\newblock {\em J. Elast.}, 107:105--123, 2012.

\bibitem{A2}
A.~Carpinteri, P.~Cornetti, and A.~Sapora.
\newblock Nonlocal elasticity: an approach based on fractional calculus.
\newblock {\em Meccanica}, 49:2551--2569, 2014.

\bibitem{A3}
E.~A Gonzalez and I.~Petr{\'a}{\v{s}}.
\newblock Advances in fractional calculus: Control and signal processing applications.
\newblock In {\em Proceedings of the 2015 16th international Carpathian control conference (ICCC)}, pages 147--152. IEEE, 2015.

\bibitem{A4}
X.~Zhang, C.~Wei, Y.~Liu, and M.~Luo.
\newblock Fractional corresponding operator in quantum mechanics and applications: A uniform fractional schr{\"o}dinger equation in form and fractional quantization methods.
\newblock {\em Ann. Physics.}, 350:124--136, 2014.

\bibitem{20}
N.~Singha and C.~Nahak.
\newblock $\alpha$-fractionally convex functions.
\newblock {\em Fract. Calc. Appl. Anal.}, 23:534--552, 2020.

\bibitem{Barsha}
B.~Shaw and M.A.T. Ansary.
\newblock Extended monotonic results related to fractional derivatives.
\newblock {\em Appl. Anal. Optim.}, 8(2):221--232, 2024.

\bibitem{23}
R.~Dahal and C.~S Goodrich.
\newblock A monotonicity result for discrete fractional difference operators.
\newblock {\em Archiv der Mathematik (AdM)}, 102:293--299, 2014.

\bibitem{24}
K.~Diethelm.
\newblock Monotonicity of functions and sign changes of their caputo derivatives.
\newblock {\em Fract. Calc. Appl. Anal.}, 19(2):561--566, 2016.

\bibitem{21}
N.~R. Bastos, R.~A Ferreira, and D.~F. Torres.
\newblock Necessary optimality conditions for fractional difference problems of the calculus of variations.
\newblock {\em arXiv preprint arXiv:1007.0594}, 2010.

\bibitem{38}
J.~Liun, R.~Zhai, Y.~Liu, W.~Li, Wang B., and L.~Huang.
\newblock A quasi fractional order gradient descent method with adaptive stepsize and its application in system identification.
\newblock {\em Appl. Math. Comput.}, 393:125797, 2021.

\bibitem{72}
Y.~Shin, J.~Darbon, and G.~E. Karniadakis.
\newblock Accelerating gradient descent and adam via fractional gradients.
\newblock {\em Neural Netw.}, 161:185--201, 2023.

\bibitem{45}
Z.~Lu, I.~Whalen, V.~Boddeti, Y.~Dhebar, K.~Deb, E.~Goodman, and W.~Banzhaf.
\newblock {NSGA}-net: neural architecture search using multi-objective genetic algorithm.
\newblock In {\em Proceedings of the genetic and evolutionary computation conference}, pages 419--427, 2019.

\bibitem{46}
M.~Doumpos and C.~Zopounidis.
\newblock Multi-objective optimization models in finance and investments.
\newblock {\em J. Glob. Optim.}, 76:243--244, 2020.

\bibitem{44}
T.~Trisna, M.~Marimin, Y.~Arkeman, and T.~C. Sunarti.
\newblock Multi-objective optimization for supply chain management problem: A literature review.
\newblock {\em repository.unimal.ac.id}, 2018.

\bibitem{43}
Z.~Wang, L.~Huang, and C.~X. He.
\newblock A multi-objective and multi-period optimization model for urban healthcare waste’s reverse logistics network design.
\newblock {\em J. Comb. Optim.}, 42:785--812, 2021.

\bibitem{50}
M.~Yong-Jie, C.~Min, G.~Ying, C.~Shi-Sheng, and W.~Zeng-Yan.
\newblock Research progress of dynamic multi-objective optimization evolutionary algorithm.
\newblock {\em Acta Autom. Sin.}, 46(11):2302--2318, 2020.

\bibitem{sir1}
M.A.T. Ansary and G.~Panda.
\newblock A modified quasi-{Newton} method for vector optimization problem.
\newblock {\em Optimization}, 64(11):2289--2306, 2015.

\bibitem{67}
M.A.T. Ansary and G.~Panda.
\newblock A sequential quadratically constrained quadratic programming technique for a multi-objective optimization problem.
\newblock {\em Eng. Optim.}, 51(1):22--41, 2019.

\bibitem{66}
M.A.T. Ansary and G.~Panda.
\newblock A sequential quadratic programming method for constrained multi-objective optimization problems.
\newblock {\em J. Appl. Math. Comput.}, 64(1):379--397, 2020.

\bibitem{JOTA2}
Wang Chen, Liping Tang, and Xinmin Yang.
\newblock Generalized conditional gradient methods for multiobjective composite optimization problems with h{\"o}lder condition.
\newblock {\em J. Optim. Theory Appl.}, 206(3):1--27, 2025.

\bibitem{60}
P.~A. Bosman and E.~D de~Jong.
\newblock Combining gradient techniques for numerical multi-objective evolutionary optimization.
\newblock In {\em Proceedings of the 8th annual conference on genetic and evolutionary computation}, pages 627--634, 2006.

\bibitem{13}
M.A.T. Ansary and G.~Panda.
\newblock A globally convergent {SQCQP} method for multiobjective optimization problems.
\newblock {\em SIAM J. Optim.}, 31(1):91--113, 2021.

\bibitem{25}
J.~Fliege and B.F. Svaiter.
\newblock Steepest descent methods for multicriteria optimization.
\newblock {\em Math. Oper. Res.}, 51:479--494, 2000.

\bibitem{30}
M.A.T. Ansary.
\newblock A {Newton}-type proximal gradient method for nonlinear multi-objective optimization problems.
\newblock {\em Optim. Methods Softw.}, 38(3):570--590, 2023.

\bibitem{61}
C.~Huang.
\newblock Smoothing accelerated proximal gradient method with fast convergence rate for nonsmooth multi-objective optimization.
\newblock {\em arXiv preprint arXiv:2312.01609}, 2023.

\bibitem{JOTA1}
Christian Kanzow and Leo Lehmann.
\newblock Convergence of nonmonotone proximal gradient methods under the kurdyka-{\l}ojasiewicz property without a global lipschitz assumption: C. kanzow, l. lehmann.
\newblock {\em J. Optim. Theory Appl.}, 207(1):4, 2025.

\bibitem{64}
H.~Tanabe, E.~H Fukuda, and N.~Yamashita.
\newblock Proximal gradient methods for multiobjective optimization and their applications.
\newblock {\em Comput. Optim. Appl.}, 72:339--361, 2019.

\bibitem{kumar1}
Shubham Kumar, Md~Abu~Talhamainuddin Ansary, Nihar~Kumar Mahato, and Debdas Ghosh.
\newblock Steepest descent method for uncertain multiobjective optimization problems under finite uncertainty set.
\newblock {\em Appl. Anal.}, 104(7):1163--1184, 2024.

\bibitem{kumar2}
Shubham Kumar, Md~Abu~Talhamainuddin Ansary, Nihar~Kumar Mahato, Debdas Ghosh, and Yekini Shehu.
\newblock Newton's method for uncertain multiobjective optimization problems under finite uncertainty sets.
\newblock {\em J. Nonlinear Var. Anal.}, 7(5), 2023.

\bibitem{kumar3}
Shubham Kumar, Nihar~Kumar Mahato, Md~Abu~T Ansary, Debdas Ghosh, and Savin Trean{\c{t}}{\u{a}}.
\newblock A modified {quasi-Newton} method for uncertain multiobjective optimization problems under a finite uncertainty set.
\newblock {\em Eng. Optim.}, 57(5):1392--1421, 2024.

\bibitem{69}
S.~C Smithson, G.~Yang, W.~J Gross, and B.~H Meyer.
\newblock Neural networks designing neural networks: multi-objective hyper-parameter optimization.
\newblock In {\em 2016 IEEE/ACM International Conference on Computer-Aided Design (ICCAD)}, pages 1--8. IEEE, 2016.

\bibitem{71}
E.~Real, A.~Aggarwal, Y.~Huang, and Q.~V Le.
\newblock Regularized evolution for image classifier architecture search.
\newblock In {\em Proceedings of the AAAI Conference on artificial intelligence}, volume~33, pages 4780--4789, 2019.

\bibitem{3}
J.~Fliege, L.~G. Drummond, and Benar~F. Svaiter.
\newblock {Newton's} method for multiobjective optimization.
\newblock {\em SIAM J. Optm.}, 20(2):602--626, 2009.

\bibitem{18}
S.~C Smithson, G.~Yang, W.~J Gross, and B.~H Meyer.
\newblock Neural networks designing neural networks: multi-objective hyper-parameter optimization.
\newblock In {\em 2016 IEEE/ACM International Conference on Computer-Aided Design (ICCAD)}, pages 1--8. IEEE, 2016.

\bibitem{33}
A.~Krizhevsky and G.~Hinton.
\newblock Learning multiple layers of features from tiny images.
\newblock {\em cs.utoronto.ca}, 2009.

\bibitem{34}
Y.~LeCun, L.~Bottou, Y.~Bengio, and P.~Haffner.
\newblock Gradient-based learning applied to document recognition.
\newblock {\em Proceedings of the IEEE}, 86(11):2278--2324, 2002.

\end{thebibliography}
\end{document}